\newtheorem{theorem}{Theorem}[section]
\newtheorem*{theorem*}{Theorem}
\newtheorem{proposition}{Proposition}[section]
\theoremstyle{definition}
\newtheorem{definition}{Definition}[section]
\newtheorem*{question}{Question}
\newcommand{\muf}[1][\varphi]{\mathrm{m}_{\scriptstyle #1}}
\newcommand{\mux}[1][x]{\mu^u_{#1}}
\newcommand{\msx}[1][x]{\mu^s_{#1}}
\newcommand{\mcux}[1][x]{\mu^{cu}_{#1}}
\newcommand{\mcsx}[1][x]{\mu^{cs}_{#1}}
\newcommand{\nux}[1][x]{\nu^u_{#1}}
\newcommand{\Jacu}[1][x_0,y_0]{\mathrm{Jac}^{u}_{#1}}
\newcommand{\mm}{\mathrm{m}}
\newcommand{\norm}[1]{\|#1\|}
\DeclarePairedDelimiterX\abs[1]||{\ifblank{#1}{\:\cdot\:}{#1}}   %%valor absoluto
\DeclarePairedDelimiterX\prodi[2]{\langle}{\rangle}{#1,#2}          %%producto interno
\DeclarePairedDelimiterXPP\prodiL[2]{}\langle\rangle{_{\scaleto{\mathscr{L}^{2}}{5pt}}}{#1,#2}         %
\DeclarePairedDelimiterXPP\prodiLL[3]{}\langle\rangle{_{\scaleto{\mathscr{L}^{2}}{7pt}(#3)}}{#1,#2}
\DeclarePairedDelimiterXPP\normOP[1]{}\|\|{_{\scaleto{\mathtt{OP}}{4pt}}}{#1}  %%norma Operador
\DeclarePairedDelimiterXPP\normH[1]{}\|\|{_{\scaleto{\Hil}{5pt}}}{#1}            %%norma Hilbert
\DeclarePairedDelimiterXPP\normTV[1]{}\|\|{_{\scaleto{TV}{5pt}}}{#1} 
\DeclarePairedDelimiterXPP\dis[3]{\mathtt{d}_{\scriptstyle #1}}(){}{#2,#3}  %%distancia 
\DeclarePairedDelimiterXPP\normC[2]{}\|\|{_{\scaleto{\mathcal{C}^{#2}}{5pt}}}{#1}  %%norma cr 
\DeclarePairedDelimiterXPP\normc[2]{}\|\|{_{\scaleto{\mathcal{c}^{#2}}{5pt}}}{#1}  %%norma cr
\DeclarePairedDelimiterXPP\normL[2]{}\|\|{_{\scaleto{\mathscr{L}^{#2}}{5pt}}}{#1}  %%norma Lp
\DeclarePairedDelimiterXPP\norml[2]{}\|\|{_{\scaleto{\ell^{#2}}{4pt}}}{#1}  %%norma lp
\DeclarePairedDelimiterX\Set[1]\{\}{#1}     %Conjunto
\DeclarePairedDelimiterXPP\Prob[1]{\mathbb{P}}(){}{#1} %prob condicional 
\DeclarePairedDelimiterXPP\ie[2]{\mathbb{E}_{#1}}(){}{#2}
\DeclareFontFamily{U}{mathb}{\hyphenchar\font45}
\DeclareFontShape{U}{mathb}{m}{n}{
      <5> <6> <7> <8> <9> <10> gen * mathb
      <10.95> mathb10 <12> <14.4> <17.28> <20.74> <24.88> mathb12
      }{}
\DeclareSymbolFont{mathb}{U}{mathb}{m}{n}
\DeclareMathSymbol{\toit}{3}{mathb}{'375}
\def\referencia#1#2{\begingroup
    #2%
    \def\@currentlabel{#2}%
    \phantomsection\label{#1}\endgroup
}
\newcommand{\Real}{\mathbb R}       %REALES
\newcommand{\Nat}{\mathbb N}        %NATURALES 
\newcommand{\Tor}{\mathbb T}        %TORO 
\newcommand{\Z}{\mathbb Z}          %Enteros 
\newcommand{\oo}{\infty}            %INFINITO
\newcommand{\PTM}[2]{\ensuremath{\mathscr{Pr}_{\cramped[\scriptstyle]{#1}}(#2)}}      	%PROBABILIDADES T-INV EN M
\newcommand{\ETM}[2]{\ensuremath{\mathscr{Erg}_{#1}(#2)}}    				%MEDIDAS T-ERGODICAS EN M
\newcommand{\aep}{\ensuremath{\operatorname{-}a.e.}}
\newcommand{\mae}{\ensuremath{\mu\operatorname{-}a.e.}}
\newcommand{\entPn}[1][P]{\ensuremath{H_m((P)_0^{-n+1})}}
\newcommand{\SB}[1][\varphi]{S_n#1}
\newcommand{\Lp}[1][2]{\ensuremath{\mathcal{L}^{#1}}}
\DeclarePairedDelimiterXPP\clo[1]{\mathtt{cl}}(){}{#1}				%clausura cl
\newcommand{\der}{\operatorname{d}\hspace{-2pt}}
\newcommand{\Hil}{\mathcal{H}}
\newcommand{\F}{\ensuremath{\mathcal{F}}}
\newcommand{\Fc}{\ensuremath{\mathcal{W}^c}}
\newcommand{\Fs}{\ensuremath{\mathcal{W}^s}}
\newcommand{\Fu}{\ensuremath{\mathcal{W}^u}}
\newcommand{\Fcs}{\ensuremath{\mathcal{W}^{cs}}}
\newcommand{\Fcu}{\ensuremath{\mathcal{W}^{cu}}}
\newcommand{\Ws}[1]{\ensuremath{W^s(#1)}}
\newcommand{\Wu}[1]{\ensuremath{W^u(#1)}}
\newcommand{\Wc}[1]{\ensuremath{W^c(#1)}}
\newcommand{\Wcs}[1]{\ensuremath{W^{cs}(#1)}}
\newcommand{\Ben}[1][x]{D(#1,\epsilon,n)}		%Bola de Bowen
\newcommand{\Ptop}{P_{\scriptstyle \mathit{top}}}			%Presion
\newcommand{\htop}{h_{\scriptstyle \mathit{top}}}   %entropia
\newcommand{\hu}{\mathrm{hol}^u}
\newcommand{\al}{\alpha}
\newcommand{\ep}{\epsilon}
\begin{document}

%%%%%%%%%%%%%%%%%%%%%%%%%%%%%%%%%%%%%%%%%%%%%%%%%%%%%%%%%%%%%%%%%%%%%%%%%%%%%%%%%%%%%%%%%

	\title{Geometrical constructions of equilibrium states}
	\author{Pablo D. Carrasco}
	\address{ICEx-UFMG, Av. Ant\^{o}nio Carlos, 6627 CEP 31270-901, Belo Horizonte, Brazil}
	\email{pdcarrasco@mat.ufmg.br}

	\author{Federico Rodriguez-Hertz}
	\address{Penn State, 227 McAllister Building, University Park, State College, PA16802}
	\email{hertz@math.psu.edu}
	% \author[1]{Pablo D. Carrasco \thanks{pdcarrasco@mat.ufmg.br}}
	% \author[2]{Federico Rodriguez-Hertz \thanks{hertz@math.psu.edu}}
 %    \affil[1]{ICEx-UFMG, Avda. Presidente Antonio Carlos 6627, Belo Horizonte-MG, BR31270-90}
	% \affil[2]{Penn State, 227 McAllister Building, University Park, State College, PA16802}

	\date{\today}

	\begin{abstract}
    In this note we report some advances in the study of thermodynamic formalism for a class of partially hyperbolic system -- center isometries, that includes regular elements in Anosov actions. The techniques are of geometric flavor (in particular, not relying in symbolic dynamics) and even provide new information in the classical case. 

    For such systems, we give in particular a constructive proof of the existence of the SRB measure and of the entropy maximizing measure. It is also established very fine statistical properties (Bernoulliness), and it is given a characterization of equilibrium states in terms of their conditional measures in the stable/unstable lamination, similar to the SRB case. The construction is applied to obtain the uniqueness of quasi-invariant measures associated to H\"older Jacobian for the horocyclic flow.
    \end{abstract}

	\maketitle

\section{Introduction} % (fold)
\label{sec:introduction}

Let $M$ be a closed Riemannian manifold and $f:M\to M$ be a diffeomorphism. The study of the statistical properties of $f$, and in particular its invariant measures, provides a powerful tool to understand the dynamical (but, not only) properties of the map. Let us recall that a Borel probability measure $\mu$ on $M$ is $f$-invariant if $f_{\ast}\mu=\mu$, where $f_{\ast}\mu(A)=\mu(f^{-1}A)$, and denote by $\PTM{f}{M}$ the set of all such measures on $M$.

For maps having rich dynamics, the set $\PTM{f}{M}$ is usually very complicated, so further restrictions have to be imposed to obtain meaningful results. A particularly important choice, both from the theoretical and applied point of view, are the so called equilibrium states; these are $f$-invariant measures obtained by a variational principle associated to some real valued map. See part \ref{sub:basic_thermodynamic_formalism} for the precise definition.

The study of such types of measures and its properties (thermodynamic formalism) is notably well developed for completely hyperbolic systems (Anosov, or more generally, Axiom A). The reader can consult \cite{EquSta} for a introduction to these topics. Notwithstanding this, under very mild relaxations of the hyperbolicity hypothesis, the panorama becomes much less understood, even in the partially hyperbolic case, one of the most extensively researched type of systems, besides hyperbolic ones.

In the present note we announce an advance in this theory and report on new methods to study some natural classes of partially hyperbolic systems, as are the ones determined by (regular elements of) Anosov Lie group actions.

\section{Measures along leaves of invariant foliations} % (fold)
\label{sec:measures_along_leaves_of_invariant_foliations}

% section measures_along_leaves_of_invariant_foliations (end)
\subsection{Basic thermodynamic formalism} % (fold)
\label{sub:basic_thermodynamic_formalism}

Consider a compact metric space $(M,d)$, and let $f:M\to M$ be continuous map. Given $x\in M,\epsilon>0, n\in \Nat$ we denote by $D(x,\ep)$ the open disc of center $x$ and radius $\ep$, and by $\Ben$ the open $(\epsilon,n)$-Bowen ball centered at $x$,
\[
\Ben=\{y\in M:d(f^jx,f^jy)<\epsilon,\ j=0,\ldots,n-1 \};
\]
denote
\begin{equation*}
s(\ep,n)=\inf\{\#E:M=\cup_{x\in E}D(x,\epsilon,n)\}. 
\end{equation*}
\begin{definition}
The topological entropy of $f$ is the quantity
\[
	\htop(f)=\lim_{\epsilon\mapsto 0}\limsup_{n\mapsto\oo}\frac{\log s(\epsilon,n)}{n}.
\]
\end{definition}
Topological entropy is perhaps the most important topological invariant for continuous maps, measuring (loosely speaking) the exponential rate of expansion between orbits. We refer the reader to \cite{ErgWal} for a introduction to this theory, and the proof of the facts below.

\smallskip 
 
It so happens that it is also important to consider some weighted versions of the previous quantity. For $\varphi:M\to\Real$ a continuous map (called the \emph{potential} in this theory), denote 
\[
S(\epsilon,n)=\inf\{\sum_{x\in E}e^{\SB(x)}:M=\cup_{x\in E}D(x,\epsilon,n)\}\quad \SB=\sum_{i=0}^{n-1}\varphi\circ f^i.
\]
\begin{definition}
The topological pressure associated to the system $(f,\varphi)$ is 
\[
P_{\mathit{top}}^f(\varphi)=\lim_{\epsilon\mapsto 0}\limsup_{n\mapsto\oo}\frac{\log S(\epsilon,n)}{n}.
\]
\end{definition}
Observe that $\htop(f)=\Ptop(f,0)$. There is also a metric version of the previous concepts; to state it let us recall that $\mu\in \PTM{f}{M}$ is ergodic ($\mu\in \ETM{f}{M}$) if any $\psi\in\Lp[2](M,\mu)$ satisfying $f\circ \psi=\psi \aep$ is constant almost everywhere.

\begin{definition}
Let $\mu$ be an ergodic measure. Then the metric entropy of $f$ with respect to $\mu$ is 
\[
	h_{\mu}(f)=\lim_{\ep\to0}\liminf_{n\to\oo}\frac{-\log \Ben}{n},
\]
and if $\varphi$ is a potential, the metric pressure of $(f,\varphi)$ with respect to $\mu$ is
\[
	P_{\mu}(f,\varphi)=h_{\mu}(f)+\int \varphi \der \mu.
\]
\end{definition}

The limit above exists $\mae$, \cite{brinkatok}. We then have:

\begin{theorem*}[Variational Principle - Walters]
	It holds
	\[
	\Ptop(f,\varphi)=\sup_{\mathclap{\mu\in \ETM{f}{M}}}\ P_{\mu}(\varphi).
	\]
\end{theorem*}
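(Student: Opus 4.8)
This is the classical variational principle of Walters; the plan is to prove the two inequalities separately, by the standard routes available at this level of generality (a continuous self‑map of a compact metric space, no hyperbolicity). For the easy inequality ``$\sup_{\mu\in\ETM{f}{M}}P_\mu(\varphi)\le\Ptop(f,\varphi)$'' I would argue straight from the Brin--Katok description of $h_\mu(f)$ used above together with a doubling trick. Fix an ergodic $\mu$; if $h_\mu(f)=\oo$ there is nothing to prove, since then $\htop(f)=\Ptop(f,0)=\oo$ and $\Ptop(f,\varphi)\ge\htop(f)+\inf\varphi=\oo$, so assume $h_\mu(f)<\oo$ and fix $\delta>0$. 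As $\epsilon\mapsto\liminf_n\frac{-\log\Ben}{n}$ is non‑increasing with limit $h_\mu(f)$ for $\mu$‑a.e. $x$, Egorov's theorem (along a sequence $\epsilon_j\downarrow0$) and Birkhoff's theorem for $\tfrac1n\SB$ furnish an $\epsilon>0$, a set $K$ with $\mu(K)>\tfrac12$, and $N\in\Nat$ with $\mu(\Ben)\le e^{-n(h_\mu(f)-\delta)}$ and $|\SB(x)-n\int\varphi\,\der\mu|\le n\delta$ for all $x\in K$, $n\ge N$. Given $n\ge N$ and an arbitrary $(\epsilon/2,n)$‑spanning set $E$, discard those $x\in E$ with $D(x,\epsilon/2,n)\cap K=\emptyset$; for each surviving $x$ pick $y_x$ in that intersection, so $D(x,\epsilon/2,n)\subseteq\Ben[y_x]$. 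The surviving balls still cover $K$ and each has $\mu$‑mass $\le e^{-n(h_\mu(f)-\delta)}$, so there are $\ge\tfrac12 e^{n(h_\mu(f)-\delta)}$ of them, and each satisfies $\SB(x)\ge n\int\varphi\,\der\mu-n\delta-n\var(\varphi,\epsilon)$ with $\var(\varphi,\epsilon)=\sup\{|\varphi(a)-\varphi(b)|:d(a,b)\le\epsilon\}$. Hence $\sum_{x\in E}e^{\SB(x)}\ge\tfrac12 e^{n(h_\mu(f)+\int\varphi\,\der\mu-2\delta-\var(\varphi,\epsilon))}$; taking $\inf$ over $E$, then $\tfrac1n\log$, then $\limsup_n$, then $\epsilon\to0$ (uniform continuity of $\varphi$) and $\delta\to0$ gives $P_\mu(\varphi)\le\Ptop(f,\varphi)$.

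For the reverse inequality the plan is Misiurewicz's construction of invariant measures from near‑optimal separated sets. Comparing spanning sets with $(\epsilon,n)$‑separated sets gives $\Ptop(f,\varphi)=\lim_{\epsilon\to0}\limsup_n\tfrac1n\log Z_n(\epsilon)$, where $Z_n(\epsilon)=\sup\{\sum_{x\in E}e^{\SB(x)}:E\ (\epsilon,n)\text{-separated}\}$. Fix $\epsilon>0$; for each $n$ choose an $(\epsilon,n)$‑separated $E_n$ with $\sum_{x\in E_n}e^{\SB(x)}\ge\tfrac12 Z_n(\epsilon)$ and a subsequence $(n_k)$ along which $\tfrac1{n_k}\log Z_{n_k}(\epsilon)\to\limsup_n\tfrac1n\log Z_n(\epsilon)$; set $\sigma_n=\big(\sum_{x\in E_n}e^{\SB(x)}\big)^{-1}\sum_{x\in E_n}e^{\SB(x)}\delta_x$, $\mu_n=\tfrac1n\sum_{i=0}^{n-1}f^i_{\ast}\sigma_n$, and, after passing to a further subsequence, let $\mu_{n_k}\to\mu$ weak‑$\ast$; the telescoping estimate for $\int(g\circ f-g)\,\der\mu_n$ gives $\mu\in\PTM{f}{M}$ and $\int\varphi\,\der\mu_{n_k}\to\int\varphi\,\der\mu$. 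With $\mu$ now fixed, choose a finite partition $\mathcal A=\{A_1,\dots,A_r\}$ all of whose atoms have diameter $<\epsilon$ and satisfy $\mu(\partial A_i)=0$ (generate it from a finite subcover of $M$ by balls of radius $<\epsilon/2$ with $\mu$‑null spheres). Writing $H_\nu(\mathcal Q)=-\sum_{Q\in\mathcal Q}\nu(Q)\log\nu(Q)$ and $\mathcal A_m=\bigvee_{i=0}^{m-1}f^{-i}\mathcal A$, the bound $\diam A_i<\epsilon$ forces every atom of $\mathcal A_n$ to contain at most one point of $E_n$ (two such points would lie in a common $(\epsilon,n)$‑Bowen ball), whence the elementary identity
\[
H_{\sigma_n}(\mathcal A_n)+\int\SB\,\der\sigma_n=\log\sum_{x\in E_n}e^{\SB(x)} .
\]

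The remaining step is the standard concavity argument: for $1\le q<n$, cut $\mathcal A_n$ into consecutive length‑$q$ blocks shifted by $j\in\{0,\dots,q-1\}$, apply sub‑additivity of $H_{\sigma_n}$ over the blocks (losing at most $2q\log r$ per $j$), average over $j$, and use concavity of $\nu\mapsto H_\nu(\mathcal A_q)$ to obtain $\tfrac qn H_{\sigma_n}(\mathcal A_n)\le H_{\mu_n}(\mathcal A_q)+\tfrac{2q^2\log r}{n}$. Letting $n=n_k\to\oo$ kills the error term, while $H_{\mu_{n_k}}(\mathcal A_q)\to H_\mu(\mathcal A_q)$ because $\mu(\partial C)=0$ for every atom $C$ of $\mathcal A_q$ (here the $f$‑invariance of $\mu$ and $\mu(\partial A_i)=0$ are used); dividing by $q$ and letting $q\to\oo$ yields $\limsup_k\tfrac1{n_k}H_{\sigma_{n_k}}(\mathcal A_{n_k})\le h_\mu(f,\mathcal A):=\lim_q\tfrac1q H_\mu(\mathcal A_q)$. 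Combining this with the displayed identity, with $\int\SB\,\der\sigma_n=n\int\varphi\,\der\mu_n$, and with the choices of $E_n$ and $(n_k)$, and invoking the identification of metric entropy with the Kolmogorov--Sinai entropy (the Brin--Katok theorem) on the ergodic components of $\mu$, together with affineness of entropy and of $\nu\mapsto\int\varphi\,\der\nu$, I get
\[
\limsup_n\tfrac1n\log Z_n(\epsilon)\le h_\mu(f,\mathcal A)+\int\varphi\,\der\mu\le\sup_{\mu\in\ETM{f}{M}}P_\mu(\varphi) .
\]
Letting $\epsilon\to0$ gives $\Ptop(f,\varphi)\le\sup_{\mu\in\ETM{f}{M}}P_\mu(\varphi)$, which with the first part is the asserted equality.

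I expect the real obstacle to be the inequality $\limsup_k\tfrac1{n_k}H_{\sigma_{n_k}}(\mathcal A_{n_k})\le h_\mu(f,\mathcal A)$: one must transport an entropy‑type quantity from the non‑invariant empirical measures $\sigma_n$ to their invariant weak‑$\ast$ limit, and the $q$‑block decomposition together with concavity of $\nu\mapsto H_\nu(\mathcal A_q)$ is precisely the device that achieves this with a vanishing error. Two lesser points need attention in parallel: the partition $\mathcal A$ must be chosen fitted to $\mu$ — small diameter to force ``at most one separated point per atom'' and hence the exact identity, $\mu$‑null boundaries to force continuity of $\nu\mapsto H_\nu(\mathcal A_q)$ at $\mu$ along $(\mu_{n_k})$ — and the limit $\mu$ need not be ergodic, which is why the last step routes through the ergodic decomposition rather than reading off $P_\mu(\varphi)$ directly.
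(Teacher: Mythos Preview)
The paper does not give its own proof of this theorem: it is quoted as a classical background result, with the reader referred to Walters' textbook \cite{ErgWal} for the argument. Your proposal is essentially the standard proof found there --- the Brin--Katok/Egorov estimate for the inequality $P_\mu(\varphi)\le\Ptop(f,\varphi)$ and Misiurewicz's construction (weighted separated sets, empirical measures, the $q$-block concavity trick, and passage to ergodic components) for the reverse --- and it is correct as written, so there is nothing substantive to compare.
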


\begin{definition}
$\mu\in\ETM{f}{M}$ is an (ergodic) equilibrium state if $\Ptop(f,\varphi)=P_{\mu}(f,\varphi)$.
\end{definition}

\subsection{Center isometries} % (fold)
\label{sub:center_isometries}

We now specify the type of map to which our results apply. 

\begin{definition}\label{def:centerisometry}
	Let  $M$\ be a closed manifold. A  diffeomorphism  $f:M\to M$ is a center isometry if there exist a continuous splitting of the tangent bundle of the form $TM=E^u\oplus E^c\oplus E^s$ and a (at least continuous) Riemannian metric $\norm{\cdot}$ such that
	\begin{enumerate}
		\item $\dim E^u, \dim E^s\geq 1$;
		\item $E^u,E^s,E^c$\ are $Df$-invariant in the sense that for every $x\in M, Df(E^{\ast}_x)=E^{\ast}_{fx}$;
		\item for every $x\in M$, for every unit vector $v^{\ast}\in E^{\ast}_x$, $\norm{D_xf(v^{c})}=1$ and $\norm{D_xf^n(v^{s})},\norm{D_xf^{-1}(v^{u})}<1.$
	\end{enumerate}
\end{definition}

Typical examples of such maps are group extensions of Anosov systems, and regular elements of Anosov actions (\cite{Katok1996}). We refer the reader to the survey article \cite{PartSurv} for basic information on these systems, and in particular for a discussion of the following:
\begin{itemize}
	\item[($\ast$)] all bundles $E^s, E^c, E^c, E^{cs}=E^c\oplus E^s, E^{cu}=E^c\oplus E^u$ are integrable to foliations $\Fs,\Fc,\Fu,\Fcs,\Fcu$ that are invariant under $f$, that is, $f$ permutes their leaves. Moreover, $\Fcs,\Fcu$ are sub-foliated by leaves of $\Fs,\Fc$ and $\Fu,\Fc$, respectively. Finally, $\Fs$ is exponentially contracting in the sense that $f$ exponentially contracts the corresponding induced distances on leaves, whereas $\Fu$ is exponentially expanding.  
\end{itemize}
% subsection center_isometries (end)

Our first central result is the next theorem.

\begin{theorem}\label{thm:familiasmedidas}
	Let $f:M\rightarrow M$ be a center isometry of class $\mathcal{C}^2$ such that every leaf of $\Fs,\Fu$ is dense. Let $\varphi:M\rightarrow \Real$ a H\"older potential that is either
    \begin{enumerate}
    \item constant on leaves of $\Fc$, or
    \item $\varphi=-\log \det Df|E^u$ (SRB case).
    \end{enumerate}
	 Then there exist  $P\in\Real$ and families of measures $\mu^u=\{\mux\}_{x\in M},\mu^s=\{\msx\}_{x\in M},\mu^{cu}=\{\mcux\}_{x\in M},\mu^{cs}=\{\mcsx\}_{x\in M}$ satisfying for every $x\in M$,
	\begin{enumerate}
		%\item The probability $\muf$ is the unique equilibrium state for the potential $\varphi$.
		\item the measure $\mu^{\ast},\ast\in \{u,s,cu,cs\}$ is a Radon measure on $W^{\ast}(x)$ of full (relative) support, and $y\in W^{\ast}(x)$ implies $\mu^{\ast}_x=\mu^{\ast}_y$; 
		\item the following quasi-invariance properties are satisfied
		\begin{enumerate}
			\item $\mu^{\sigma}_{fx}=e^{P-\varphi}f_{\ast}\mu^{\sigma}_{x}\quad \sigma\in\{u,cu\}$.
			\item $\mu^{\sigma}_{fx}=e^{\varphi-P}f_{\ast}\mu^{\sigma}_{x}\quad\sigma\in\{s,cs\}$. 
		\end{enumerate}
	\end{enumerate} 	 
\end{theorem}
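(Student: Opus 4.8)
The plan is to construct the unstable family $\mu^u = \{\mux\}_{x\in M}$ first, using a Patterson–Sullivan / pressure-normalized limiting procedure along unstable leaves, and then obtain the remaining families by symmetry ($s$ from $u$ by replacing $f$ with $f^{-1}$ and $\varphi$ with $-\varphi$, up to the pressure shift) and by fibering over the center ($cu$ from $u$ via the sub-foliation of $\Fcu$ by $\Fu$ and $\Fc$, using that $f$ is a center isometry so the center carries a canonical invariant length/Lebesgue class). Concretely, I would fix a reference point, cover a local unstable plaque $\Wuloc{x}$ by $(\epsilon,n)$-Bowen balls, weight each by $e^{\SB(y)}$ as in the definition of $S(\epsilon,n)$, normalize by the partition function restricted to the plaque, and extract a weak-$*$ limit as $n\to\oo$; the number $P$ will be forced to equal $\Ptop(f,\varphi)$ (or at least the relevant unstable pressure) precisely so that the limit is non-degenerate and the cocycle relation $\mu^{u}_{fx} = e^{P-\varphi} f_*\mu^u_x$ holds — this relation is exactly the statement that pushing a plaque forward multiplies the local mass by $e^{\SB}$ and the normalization shifts by $e^{-nP}$, so it is built into the construction. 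The Hölder hypothesis on $\varphi$ enters to give the bounded-distortion estimate $\bigl|\SB(y)-\SB(z)\bigr| \le C$ for $y,z$ in the same Bowen ball, which is what makes the family of normalized sums precompact and the limit independent of the chosen covers.

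The key steps, in order: (1) prove a bounded-distortion lemma along unstable plaques from Hölder continuity of $\varphi$ and exponential contraction of $\Fu$ under $f^{-1}$ (property $(\ast)$); in the SRB case $\varphi = -\log\det Df|E^u$ this is the classical Gibbs/distortion estimate for the unstable Jacobian, which is Hölder because $f\in\mathcal{C}^2$. (2) Define finite measures on local unstable plaques by the normalized weighted-cover procedure and show, via (1), that different scales $\epsilon$ and lengths $n$ give uniformly comparable measures, so a weak-$*$ limit $\mux$ exists on $\Wuloc{x}$; glue local pieces along the leaf $W^u(x)$ to get a Radon measure of full support — here density of leaves is not yet needed, only local structure. (3) Verify the quasi-invariance $\mu^u_{fx} = e^{P-\varphi}f_*\mu^u_x$ directly from the construction, and verify leaf-invariance $y\in W^u(x)\Rightarrow \mu^u_x=\mu^u_y$ (immediate, as both are the same leafwise object normalized consistently — one must check the normalization is leaf-global, which uses that $\Wuloc$ plaques overlap). (4) Transport to the $cs$/$s$ side: apply the same construction to $f^{-1}$, noting $S_n(-\varphi)$ for $f^{-1}$ relates to $S_n(\varphi)$ for $f$ up to the $e^{nP}$ factor, yielding $\mu^s_{fx} = e^{\varphi-P}f_*\mu^s_x$. (5) Build $\mu^{cu}_x$ on $W^{cu}(x)$ by disintegrating along the $\Fu$-sub-foliation with conditionals $\mux$ and base measure the center Lebesgue class (which is $f$-invariant up to the center isometry, so contributes no extra factor); check that the product structure is absolutely continuous so the disintegration is well-defined, and that the cocycle factor for $\mu^{cu}$ is the same $e^{P-\varphi}$ since the center part is isometric. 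Symmetrically for $\mu^{cs}$.

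The main obstacle I anticipate is step (2)–(3): showing that the normalized weighted covers actually converge (not merely have comparable subsequential limits) and that the limit is canonical — i.e. genuinely a \emph{function of the leaf}, consistent across overlapping plaques and across choices of $\epsilon, n$. This is where one typically needs either a genuine Gibbs inequality $C^{-1} \le \mux(\Benu[y])\,e^{nP - \SB(y)} \le C$ uniformly, or an auxiliary transfer-operator / equicontinuity argument; proving two-sided bounds in the center-isometry setting (where there is no uniform expansion in the center to absorb errors) is delicate and is presumably where the paper's geometric techniques, rather than symbolic dynamics, do their real work. A secondary difficulty is the SRB case, where $\varphi$ depends on the metric and $P=0$ is expected but must be shown, and where the distortion control must be uniform in the leaf despite the metric being only continuous transversally; here one leans on $f\in\mathcal{C}^2$ to get Hölder unstable Jacobians along leaves even if the global regularity is weaker.
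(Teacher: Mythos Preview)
The paper does not actually prove this theorem: it is an announcement, and the only information offered is the single sentence ``the method of the proof used is a generalization of the one used by Margulis for studying the entropy maximizing measure ($\varphi\equiv 0$) in mixing hyperbolic flows,'' with full details deferred to the cited preprints. So there is no detailed argument to compare against line by line, only a methodological pointer.

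That pointer, however, does indicate a different route from yours. Margulis's construction does not proceed via weighted $(\epsilon,n)$-Bowen covers; rather, one takes a reference measure (Lebesgue, say) on a local unstable plaque, pushes it forward by $f^n$ while multiplying by $e^{S_n\varphi}$, normalizes by $e^{nP}$, and shows that the resulting functionals on continuous test functions converge --- the argument lives on a function space and uses an averaging/equicontinuity mechanism rather than discrete covers. Your Bowen-ball scheme is closer in spirit to the thermodynamic-formalism or Patterson--Sullivan side; it can be made to work, but as you yourself note, upgrading subsequential compactness to genuine convergence and canonicity is the hard part, and the Margulis push-forward approach is designed precisely to make that step tractable.

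This leads to the one substantive omission in your outline: you never invoke the hypothesis that every leaf of $\Fs$ and $\Fu$ is dense. In the Margulis-type argument minimality is not decorative --- it is what forces the normalizing exponent $P$ to be the same on every leaf, what yields full support, and what converts subsequential limits into honest limits independent of all choices (this is exactly the mechanism resolving the canonicity problem you flag in steps (2)--(3)). Without it your construction produces leafwise measures, but you cannot conclude that the $P$'s agree across leaves or that the families glue coherently; your step (5), assembling $\mu^{cu}$ as a local product of $\mu^u$ with center Lebesgue, also tacitly presupposes that coherence. So the missing idea is not a technicality but the structural role of minimality in the convergence argument; once you see where it enters, the Margulis scheme becomes the natural vehicle.
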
		

The method of the proof used is a generalization of the one used by Margulis for studying the entropy maximizing measure ($\varphi\equiv 0$) in mixing hyperbolic flows \cite{TesisMarg}.

\smallskip 
 A consequence of the above is that the foliation $\Fu$ is absolutely continuous with respect to the family $\mu^{cs}$. To explain this, consider $x_0,y_0$ in the same leaf of $\Fu$ and let $\hu=\hu_{x_0,y_0}:A(x_0)\subset \Wcs{x_0}\rightarrow B(y_0)\subset\Wcs{y_0}$ is the Poincaré map that sends $x_0$ to $y_0$. Then 
	\[
		(\hu)^{-1}_{\ast}\mu^{cs}_{y_0}=\Jacu\cdot \mu^{cs}_{x_0}
	\]
	where
	\begin{equation}\label{eq:jacu}
		\Jacu(x)=\prod_{j=1}^{\oo}\frac{e^{\varphi\circ f^{-j}(\hu x)}}{e^{\varphi\circ f^{-j}(x)}}.
	\end{equation} 
	
\begin{proof}[sketch of the proof]
By $(2)$ we get that $f^{-n}\mu_{x_0}^{cs}=e^{\SB\circ f^{-n+1}-nP}\mu_{f^{-n}x_0}^{cs}$, and similarly for $y_0$. For $x\in A(x_0), y=\hu(x) \in B(y_0)$, the points $f^{-n}x, f^{-n}y$ approximate each other exponentially fast with $n$, therefore by the H\"older assumption on $\varphi$, the difference $|\SB(f^{-n+1}(x))-\SB(f^{-n+1}(x))|$ is bounded in $n$, thus implying that $\Jacu(x)$ is well defined. Finally, by invariance of the foliations, $\hu{x_0,y_0}=f^{n}\circ\hu_{f^{-n}x_0,f^{-n}y_0}\circ f^{n}$, and representing $\hu_{f^{-n}x_0,f^{-n}y_0}$ in exponential charts near $f^{-n}x_0$ one sees that this map approximates uniformly the identity. From the above facts follows the claim.   
\end{proof}

Similar considerations can be made to the $\mu^{cu}, \Fs$. We now define on each leaf a $\Wu{x}\in\Fu$ a projective class of measures $[\nu^u_x]$ where
\begin{equation}\label{eq.nux}
\nu^u_x=\Delta_x^u\ \mux\quad \Delta_x^u(y):=\prod_{k=1}^{\oo}\frac{e^{\varphi\circ f^{-k}(y)}}{e^{\varphi\circ f^{-k}(x)}}, y \in \Wu{x}.
\end{equation}
The function $\Delta_x^u:\Wu{x}\to\Real$ is continuous, and if $x'\in \Wu{x}$, $\nu^u_{x'}=\Delta^u_{x'}(x)\nu^u_x$. Furthermore, we have 
\begin{align}\label{eq:invariancianu}
f^{-1}\nux[fx]=\Delta_{fx}\circ f\cdot f^{-1}\mux[fx]=\Delta_{fx}\circ f \cdot e^{P-\varphi}\mux =e^{P-\varphi(x)}\nux.
\end{align}

\section{Construction of the equilibrium state} % (fold)
\label{sec:construction_of_the_equilibrium_state}
% section construction_of_the_equilibrium_state (end)

We will now use the families of measures given in theorem \ref{thm:familiasmedidas} to construct an equilibrium state for the system $(f,\varphi)$. We keep the assumptions of that theorem for the rest of the article.  

\begin{definition}
If $\F\subset M^n$ is a foliation of codimension $q$ we say that an open set $U\subset M$ is foliation box of $\F$ if it is homeomorphic to $C=(-1,1)^{n-q}\times (-1,1)^{q}$ by a homeomorphism sending $\F|U$ to the horizontal foliation of $C$. In this case the embedded discs of $U$ corresponding to the vertical foliation of $C$ will be called the vertical slices of $U$.
\end{definition}

Fix a foliation box $U$ of $\Fu$ together with a vertical slice $W$, that without loss of generality can be assumed to  be a disc in some $\Wcs{x_0}$. For $A\subset U$ open, define the function $\al_{U,W,A}:W\to \Real$ by $\al_{U,W,A}(w)=\nu^u_w(A\cap\Wu{w,U})$, where $\Wu{u,U}$ is the connected component of $\Wu{w}\cap U$ that contains $w$. This function is Borel measurable (in fact, semi-continuous).

Now consider the Borel regular measure $\mm_{U,W}$ on $U$ determined by
\[
	A\subset U\text{ open}\Rightarrow \mm_{U,W}(A)=\int_W  \al_{U,W,A}(w) \der\mu^{cs}_{x_0}(w).
\]
The key property is the following.

\begin{proposition}
 If $W'$ is another vertical slice of $U$ then  $\mm_{U,W}=\mm_{U,W'}$. 
\end{proposition}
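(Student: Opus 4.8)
The plan is to compare the defining integrals of the two Borel regular measures $\mm_{U,W}$ and $\mm_{U,W'}$ through the unstable holonomy $W\to W'$ inside $U$, using that this holonomy transports the family $\mu^{cs}$ to itself up to the Jacobian $\Jacu$ (the absolute continuity property established above) and that the leafwise family $\nu^u$ satisfies $\nu^u_{x'}=\Delta^u_{x'}(x)\,\nu^u_x$ for $x'\in\Wu{x}$. Since $\mm_{U,W}$ and $\mm_{U,W'}$ are Borel regular, it suffices to check $\mm_{U,W}(A)=\mm_{U,W'}(A)$ for every open $A\subset U$.

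First I would set up the geometry. As $\mm_{U,W}$ and $\mm_{U,W'}$ are defined, $W$ and $W'$ are vertical slices lying in center-stable leaves, say $W\subset\Wcs{x_0}$ and $W'\subset\Wcs{y_0}$. Each unstable plaque of $U$ meets $W$ and $W'$ in exactly one point, so the unstable Poincar\'e map $h=\hu_{x_0,y_0}\colon W\to W'$ sending $w$ to the unique point of $\Wu{w,U}\cap W'$ is a homeomorphism, with $h(w)\in\Wu{w,U}$ and $\Wu{h(w),U}=\Wu{w,U}$; we take $y_0=h(x_0)$. Applying the absolute continuity statement to $h$ gives
\[
  (h^{-1})_{\ast}\bigl(\mcsx[y_0]|_{W'}\bigr)=\Jacu\cdot\bigl(\mcsx[x_0]|_{W}\bigr),
\]
and comparing \eqref{eq:jacu} with \eqref{eq.nux}, since $h(w)\in\Wu{w}$, one gets $\Jacu(w)=\Delta^u_w(h(w))$.

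The remaining step is the change of variables. Substituting $w'=h(w)$ in $\mm_{U,W'}(A)=\int_{W'}\nu^u_{w'}\bigl(A\cap\Wu{w',U}\bigr)\,d\mcsx[y_0](w')$ and using the displayed identity,
\[
  \mm_{U,W'}(A)=\int_{W}\nu^u_{h(w)}\bigl(A\cap\Wu{w,U}\bigr)\,\Jacu(w)\,d\mcsx[x_0](w).
\]
By the transformation rule, $\nu^u_{h(w)}=\Delta^u_{h(w)}(w)\,\nu^u_w$, while the product formula for $\Delta^u$ gives the cocycle identity $\Delta^u_{h(w)}(w)\,\Delta^u_w(h(w))=1$, so $\Delta^u_{h(w)}(w)=\Jacu(w)^{-1}$. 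The two Jacobian factors cancel, leaving $\mm_{U,W'}(A)=\int_W\nu^u_w\bigl(A\cap\Wu{w,U}\bigr)\,d\mcsx[x_0](w)=\mm_{U,W}(A)$.

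I expect the routine part to be exactly this cancellation; the main obstacle is making sure the ingredients are licit. Concretely: that $h$ is precisely the Poincar\'e map to which the (sketched) absolute continuity statement applies --- which is the case because $W$ and $W'$ are center-stable plaques sitting inside one unstable foliation box, so $f^{-n}$ contracts them exponentially, $\Jacu$ converges, and $\Jacu(\cdot)=\Delta^u_{\cdot}(h(\cdot))$ there --- and that $w\mapsto\nu^u_w(A\cap\Wu{w,U})$ is $\mcsx[x_0]$-measurable, which follows from the semicontinuity of $\al_{U,W,A}$ and the continuity of $\Delta^u$, so that all the integrals above are meaningful.
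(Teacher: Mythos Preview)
Your proof is correct and follows essentially the same route as the paper's: both change variables via the unstable holonomy $h:W\to W'$, invoke the absolute continuity $(h^{-1})_\ast\mu^{cs}_{y_0}=\Jacu\cdot\mu^{cs}_{x_0}$, and then cancel the Jacobian against the factor relating $\nu^u_{h(w)}$ to $\nu^u_w$. The only difference is cosmetic---you phrase the cancellation via the cocycle identity $\Delta^u_{h(w)}(w)\,\Delta^u_w(h(w))=1$, whereas the paper simply writes out the infinite product for $\Jacu$ and absorbs it directly into the definition of $\nu^u$.
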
  

\begin{proof}
Without loss of generality assume $W' \subset \Wcs{x_0'}$, and consider $h=\hu_{w_0,w_0'}:W\rightarrow W'$ the Poincaré map. For $A \subset U$ open and $w\in U$ denote $A_w=A\cap \Wu{w,U}$; then 
\begin{align*}
\mm_{U,W'}(A)&=\int_{W'} \nu^u_{w'}(A_{w'})\der\mcsx[w_0'](w')=
\int_{W} \nu^u_{h(w)}(A_{h(w)})\Jacu[w_0,w_0'](w)\der\mcsx[w_0](w)\\
&=\int_{W} \left(\nu^u_{h(w)}(A_{w})\prod_{k=1}^{+\oo}\frac{e^{\varphi\circ f^{-k}h(w)}}{e^{\varphi\circ f^{-k}(w)}}\right)\der\mcsx[w_0](w)\\
&=\int_W \nu^u_w(A_w)\der\mcsx[w_0](w)=\mm_{U,W}(A).
\end{align*}
\end{proof}

We write $\mm_{U}=\mm_{U,W}$ where $W \subset U$ is any vertical slice. As a consequence, we have that if $U,U', U''$ are foliation boxes of $\Fu$ with $U\cup U' \subset U''$ then $\mm_{U}=\mm){U'}$. We can thus fix a finite covering $\{U_i\}_{i=1}^r$ of foliation boxes of $\Fu$ and define a Borel probability measure $\muf$ on $M$ such that for every $i$ it holds $\muf|U_i=c\cdot\mm_{U_i}$, where $c>0$ is a normalization constant.

It is not hard to realize that $\muf$ is $f$-invariant (compare $(1)$ and $(2)$ in \ref{thm:familiasmedidas}). The following is also true.

\begin{theorem}\label{thm:estadodequilibrio}\hfill
\begin{enumerate}
	\item $\muf$ is the unique equilibrium state for the potential $\varphi$ and $P=\Ptop(f,\varphi)$; in particular $\muf$ is ergodic.
	\item If $U$ is a sufficiently small foliation box centered at $x\in M$, then $\muf|U$ is equivalent to $\mux\times \mcsx$.
	\item There exists $K\geq 0$ only depending on $\varphi$ such that for every $\epsilon>0$ there exists $c(\ep)>0$  satisfying for every $x\in M,n\geq 0$
	\[
	c(\epsilon)^{-1}e^{-Kn}\leq\frac{\muf(D(x,\epsilon,n))}{e^{\SB(x)-n\Ptop(\varphi)}}\leq e^{Kn}c(\epsilon).
		\]
	If the potential is constant along leaves of $\Fc$ then one can take $K=0$.  
\end{enumerate}
\end{theorem}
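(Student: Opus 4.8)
The strategy is to establish part (3) first, since parts (1) and (2) follow from it by fairly standard thermodynamic-formalism arguments once the Gibbs-type estimate on Bowen balls is in hand. To prove (3), I would fix a small foliation box $U$ centered at $x$ and use the local product description $\muf|U \asymp \mux\times\mcsx$ (which is really the definition of $\muf$ via the proposition on $\mm_U$). A Bowen ball $D(x,\epsilon,n)$ intersected with $U$ contains, and is contained in, a product of an unstable piece and a center-stable piece of comparable size; the center-stable directions contribute only a bounded factor (the center is isometric and the stable direction is contracting, so forward iterates do not separate center-stable points beyond a fixed scale), so the whole estimate is governed by the unstable factor $\mux(D(x,\epsilon,n)\cap\Wu{x})$. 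Now apply the quasi-invariance $(2a)$ of Theorem \ref{thm:familiasmedidas}: pushing forward $n$ times, $\mux(D(x,\epsilon,n)\cap\Wu{x})$ is comparable, up to the factor $e^{\SB(x)-nP}$ coming from the cocycle $e^{P-\varphi}$ summed along the orbit (plus a bounded Hölder-distortion error of the same type as in the sketch following Theorem \ref{thm:familiasmedidas}), to $\mux[f^n x](f^n(D(x,\epsilon,n))\cap\Wu{f^n x})$, which is an unstable ball of definite inner and outer radius at $f^n x$; by compactness and full support of the $\mux$ these have measure bounded above and below by constants $c(\epsilon)^{\pm1}$. The extra $e^{\pm Kn}$ slack absorbs the center-stable contribution, and when $\varphi$ is constant on $\Fc$-leaves the center-stable holonomies are genuinely measure-preserving for the relevant families, so $K=0$.

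For part (1), once (3) is established, $\muf$ is a Gibbs measure in the sense of having the Bowen-ball estimate $\muf(D(x,\epsilon,n))\asymp_{c(\epsilon)} e^{\SB(x)-nP}$ (at least for $\Fc$-constant potentials; in the SRB case one carries the $e^{\pm Kn}$ through, which still suffices because one ultimately takes $\tfrac1n\log$ and lets $\epsilon\to0$). From this I get $h_{\muf}(f)+\int\varphi\,d\muf = P$ by the Brin--Katok formula together with the Shannon--McMillan--Breiman-type estimate $-\tfrac1n\log\muf(D(x,\epsilon,n)) \to h_{\muf}(f)$ combined with Birkhoff for $\tfrac1n\SB(x)\to\int\varphi\,d\muf$, giving $P_{\muf}(\varphi)=P$; matching this against the variational principle forces $P=\Ptop(f,\varphi)$ and $\muf$ an equilibrium state. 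Uniqueness and ergodicity: any ergodic equilibrium state $\nu$ must itself satisfy the same Bowen-ball lower bound (by a standard argument — otherwise a covering argument would produce pressure exceeding $P$), hence $\nu\ll\muf$ on each foliation box with bounded density, and then the density is $f$-invariant and bounded, so ergodicity of $\muf$ (which follows because $\muf$ restricted to unstable leaves has full support and the $\Fs,\Fu$ leaves are dense, giving a Hopf-type argument) yields $\nu=\muf$; in particular $\muf$ is the unique equilibrium state and is ergodic. Part (2) is then immediate: in a small box the construction of $\muf$ is literally an integral of $\nu^u_w$ against $\mcsx$, and $\nu^u_w = \Delta^u_w\,\mu^u_w$ with $\Delta^u_w$ continuous and positive, so $\muf|U \sim \mux\times\mcsx$.

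The main obstacle I expect is the control of the center-stable direction inside the Bowen ball in the SRB case, i.e. showing that the discrepancy between $D(x,\epsilon,n)\cap U$ and a genuine $\mux\times\mcsx$-product box is at worst $e^{\pm Kn}$ with $K$ depending only on $\varphi$ (and vanishing when $\varphi$ is $\Fc$-constant). This requires quantifying how much the $\mu^{cs}$-family fails to be $f$-invariant under the normalized dynamics — precisely the factor $e^{\SB\circ f^{-n+1}-nP}$ in the quasi-invariance $(2b)$ — and checking that, after the $e^{\SB(x)-nP}$ normalization in the statement, what remains is a Hölder-cocycle error that is at most linear in $n$; the delicate point is that center-stable points in a Bowen ball need not shadow each other (the center is only isometric), so one must use that the potential, while not constant on $\Fc$, is Hölder and the center displacement stays bounded, to bound $|\SB(y)-\SB(x)|$ by $Kn$ uniformly. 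The remaining ingredients (Brin--Katok, Birkhoff, the Hopf-type ergodicity argument, and the covering argument for uniqueness) are routine given the Gibbs property.
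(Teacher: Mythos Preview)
The paper is an announcement and does not prove this theorem: after the statement it only remarks that ``uniqueness implies ergodicity, by standard arguments'' and that part (3) generalizes the Gibbs property, deferring the full arguments to the preprints cited in Section~\ref{sec:concluding_remarks}. There is therefore no proof here to compare your proposal against, so I can only comment on the internal soundness of your plan.

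Your outline for (2) and (3) is reasonable and matches the spirit of the constructions in Sections~\ref{sec:measures_along_leaves_of_invariant_foliations}--\ref{sec:construction_of_the_equilibrium_state}, but your route to (1) has a genuine gap in the SRB case. You propose to deduce $h_{\muf}(f)+\int\varphi\,d\muf=P$ from (3) via Brin--Katok and Birkhoff, claiming the $e^{\pm Kn}$ slack ``still suffices because one ultimately takes $\tfrac1n\log$ and lets $\epsilon\to0$''. But by hypothesis $K$ depends only on $\varphi$, not on $\epsilon$; after $\tfrac1n\log$ and $n\to\infty$, $\epsilon\to0$, you are left with
\[
\big|\,h_{\muf}(f)+\textstyle\int\varphi\,d\muf-P\,\big|\leq K,\qquad |P-\Ptop(f,\varphi)|\leq K,
\]
which neither identifies $P$ with the pressure nor shows $\muf$ is an equilibrium state when $K>0$. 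For the SRB potential one needs a separate mechanism --- e.g.\ the Pesin entropy formula / Ledrappier--Young theory, or the conditional-measure characterization stated later as Theorem~\ref{thm:conversemarginal} --- to close this. Your uniqueness sketch (``any ergodic equilibrium state satisfies the same lower bound, hence $\nu\ll\muf$ with bounded density'') is also too fast: that implication is not standard and, in this setting, is essentially the content of Theorem~\ref{thm:conversemarginal} rather than a covering argument.
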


Uniqueness implies ergodicity, by standard arguments. The last part is a generalization of the so called Gibbs property, a concept of central importance in statistical physics. See \cite{thermodynform}.

\section{Characterization of equilibrium states in terms of conditional measures} % (fold)
\label{sec:characterization_of_equilibrium_states_in_terms_of_conditional_measures}

In hyperbolic systems (say, taking $E^c=0$ in definition \ref{def:centerisometry}), a particularly relevant measure is the SRB measure. This is an $f$-invariant measure $\mu$ that can be characterized by one of the two following equivalent conditions:
\begin{enumerate}
 	\item $\mu$ is an equilibrium state for the the potential $\varphi=-\log \det Df|E^u$;
 	\item the conditionals of $\mu$ induced in leaves of $\Fu$ are absolutely continuous with respect the induced Lebesgue.
\end{enumerate}
This notion was introduced first by Sinai in \cite{SinaiMarkov} and developed by Ruelle and Bowen \cite{SRBattractor}  \cite{BowenMarkov}, \cite{BowenRuelle}. It is difficult to overestimate the importance of SRB measures in ergodic theory; the survey article \cite{whatareSRB} remains as an excellent introduction to the subject, and the reader can also check \cite{Beyond} for more recent developments.

Let us explain the second condition. Given a Borel probability $\mm$ on $M$ and a family of measures $\eta^u\{\eta^u_x\}_{x\in M}$ on the leaves of $\Fu$ one says that $\mm$ has conditionals absolutely continuous/equivalent to $\eta^u$ if for some (equivalently, for every) measurable $\mm$-partition $\xi$ subordinated to the partition by leaves of $\Fu$ it holds that the induced conditional measures $\mm^{\xi}_x$ are absolutely continuous/equivalent with respect to $\eta^u_x$, for $\mm\aep$; in this part we use freely the theory of conditional measures as developed for example in \cite{Rokhlin}. Going back to $(2)$, for SRB measures the family $\eta^u$ is given by the corresponding induced Lebesgue measures on leaves of $\Fu$. Let us also point out that this characterization of SRB measures for (non-hyperbolic) systems is consequence of the very general results of Ledrappier and Young \cite{LedYoungI,LedYoungII}, a culmination of several other important contributions that for the sake of keeping this presentation short, we refer to the above cited articles for the references. 

While trying to generalization the characterization to other equilibrium states, the difficulty is the absence of families of measures to compare with. Here we solve this problem, that even for classical hyperbolic system remained unknown until now (observe that if $E^c=0$ then any potential is automatically constant along center leaves).

\begin{theorem}\label{thm:conversemarginal}
Let $\mm\in \PTM{f}{M}$ and assume that with respect to some\footnote{Of the type considered in \cite{LedYoungI}.} partition $\xi$ we have that $\mm_x<<\mu_x^u$ for $\mm\aep(x)$. Then $\mm$ is an equilibrium estate (a posteriori, $\mm=\muf$).
\end{theorem}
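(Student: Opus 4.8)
By Theorem~\ref{thm:estadodequilibrio} the pair $(f,\varphi)$ has a unique equilibrium state, $\muf$; since entropy and $\int\varphi\,\der(\cdot)$ are affine over the ergodic decomposition, it suffices to prove that $\mm$-a.e.\ ergodic component $\mm'$ of $\mm$ satisfies $h_{\mm'}(f)+\int\varphi\,\der\mm'\ge P:=\Ptop(f,\varphi)$ (the reverse inequality being the variational principle), for then each $\mm'$ equals $\muf$ and hence so does $\mm$. I would get this from a one-sided Gibbs estimate together with the Brin--Katok formula $h_{\mm'_x}(f)=\lim_{\epsilon\to0}\liminf_{n\to\oo}-\tfrac1n\log\mm(\Ben)$, valid for $\mm$-a.e.\ $x$ (where $\mm'_x$ is the ergodic component of $x$): namely that for $\mm$-a.e.\ $x$ there are $C_n(x)>0$ with $\tfrac1n\log C_n(x)\to 0$ and a modulus $\tau(\epsilon)\to 0$ with
\[
	\mm(\Ben)\ \le\ C_n(x)\,e^{\,n\tau(\epsilon)}\,e^{\,\SB(x)-nP}\qquad (n\ \text{large}).
\]
Indeed, inserting this in the Brin--Katok formula and using Birkhoff's theorem ($\tfrac1n\SB(x)\to\int\varphi\,\der\mm'_x$) yields $h_{\mm'_x}(f)\ge P-\int\varphi\,\der\mm'_x-\tau(\epsilon)$ for every $\epsilon$, hence equality; a byproduct (via Theorem~\ref{thm:estadodequilibrio}(2)) is that the $\Fu$-conditionals of $\mm$ are in fact \emph{equivalent}, not merely absolutely continuous, to $\mux$.

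For the estimate one exploits the geometry of Bowen balls of a center isometry. By $(\ast)$ and uniform partial hyperbolicity, inside a small foliation box $\Ben$ looks like a ``rectangle'': along $\Fu$ it is the dynamically small set $Q_w:=\Ben\cap\Wu{w}$, while along $\Fcs$ its width stays $\asymp\epsilon$ \emph{independently of $n$} --- this last point uses crucially that $E^c$ is isometric (and $E^s$ contracting). Disintegrating $\mm$ along $\Fu$, $\mm(\Ben)=\int\mm^\xi_w(Q_w)\,\der\bar q(w)$ with $\bar q$ the $\Fu$-quotient of $\mm$, and the set of base points contributing has $\bar q$-mass $\le c(\epsilon)$, bounded in $n$. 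Writing $\der\mm^\xi_w=g_w\,\der\nux[w]$ --- legitimate since $\mm^\xi_w\ll\mux[w]\sim\nux[w]$, because $\nux[w]=\Delta^u_w\,\mux[w]$ with $\Delta^u_w$ continuous and positive (see \eqref{eq.nux}) --- and iterating $n$ times the exact conformality $f^{-1}_{\ast}\nux[fw]=e^{P-\varphi}\nux[w]$ of \eqref{eq:invariancianu} over $Q_w$ (whose iterates $f^jQ_w$ have geometrically growing diameters of order at most $\epsilon$, so that the H\"older hypothesis makes the Birkhoff-sum distortion a summable $O(\epsilon^{\al})$ contribution), one gets
\[
	\nux[w](Q_w)\ \le\ C\,e^{\,O(n\epsilon^{\al})}\,e^{\,\SB(x)-nP},\qquad C:=\sup_{y\in M}\nux[y]\big(\Wuloc{y}\big)<\oo,
\]
where the factor $e^{O(n\epsilon^{\al})}$ absorbs both that distortion and the estimate $\SB(w)=\SB(x)+O(n\epsilon^{\al})$ for $w$ in the rectangle (the origin of $\tau(\epsilon)$). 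Combining with a differentiation bound $\mm^\xi_w(Q_w)\le g_w(w)\,(1+o(1))\,\nux[w](Q_w)$ and integrating over $w$ gives the displayed estimate.

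I expect the main obstacle to be exactly that last, differentiation-type, bound: controlling $\mm^\xi_w(Q_w)$ by the Radon--Nikodym density of $\mm^\xi_w$ with respect to $\nux[w]$ at $w$, uniformly enough along the Bowen balls of $\mm$-a.e.\ point and with only subexponential error. This is where the absolute-continuity hypothesis is genuinely consumed, and it is delicate: the sets $Q_w$ are dynamically defined and, when $E^u$ fails to be conformal, become increasingly eccentric, so a naive differentiation theorem for geometric balls of $\nux[w]$ may fail. The robust way around this is the Ledrappier--Young apparatus for the equality case of Ruelle's inequality: take $\xi$ to be an increasing measurable partition subordinate to $\Fu$ of the type of \cite{LedYoungI}, replace the geometric plaques by the atoms of $f^{-n}\xi$ and differentiation by martingale convergence for the filtration $\{f^{-n}\widehat\xi\}_n$ together with the Hardy--Littlewood maximal inequality, using the doubling/bounded-distortion of the family $\nu^u$ that follows from \eqref{eq:invariancianu} and uniform expansion. (An equivalent packaging verifies $h_{\mm'}(f)=H_{\mm'}(f^{-1}\xi\mid\xi)$ --- all entropy is ``unstable'' since $E^c$ is isometric and $E^s$ contracting --- and evaluates this conditional entropy as $P-\int\varphi\,\der\mm'$ by the same computation, the $\nu^u$-normalizations telescoping through $f$-invariance.) Finally, if $\varphi$ is constant on $\Fc$-leaves the distances $d(f^jw,f^jx)$ appearing above also contract, $\tau(\epsilon)$ disappears, and one recovers the sharp ($K=0$) Gibbs bound of Theorem~\ref{thm:estadodequilibrio}; together with the converse contained in Theorem~\ref{thm:estadodequilibrio}(2), this yields the announced SRB-type characterization.
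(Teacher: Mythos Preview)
The paper is an announcement and does not include a proof of this theorem; the authors defer the argument to the preprints cited in Section~\ref{sec:concluding_remarks}. So there is nothing in the present text to compare your proposal against line by line.

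That said, your strategy is the standard and correct one, and almost certainly coincides with what the authors have in mind. The ``equivalent packaging'' you sketch at the end --- computing $h_{\mm}(f)=H_{\mm}(f^{-1}\xi\mid\xi)$ directly via the conformality \eqref{eq:invariancianu} of $\nu^u$ and the Ledrappier--Young theory --- is in fact the cleaner route and avoids the Brin--Katok/differentiation detour entirely: since $E^c$ is isometric, all entropy is carried by $\Fu$, so $h_{\mm}(f)=-\int\log\mm^{\xi}_x\big((f^{-1}\xi)(x)\big)\,\der\mm(x)$; writing $\der\mm^{\xi}_x=g_x\,\der\nux$ and using $f_\ast\nux=e^{\varphi-P}\nux[fx]$, the Jacobian of $\mm^{\xi}$ under $f$ telescopes to $e^{P-\varphi}\cdot(g_{fx}\circ f)/g_x$, whose logarithm integrates to $P-\int\varphi\,\der\mm$ because $\log g$ is in $\Lp[1]$ (this is where the absolute continuity is consumed, together with the $f$-invariance of $\mm$ forcing $\int\log g_{fx}\circ f\,\der\mm=\int\log g_x\,\der\mm$). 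This gives $h_{\mm}(f)+\int\varphi\,\der\mm=P$ outright, without the $\tau(\epsilon)$ bookkeeping.

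Your Brin--Katok version is also viable, and you correctly isolate its only delicate point: the sets $Q_w$ may be highly eccentric when $E^u$ is not conformal, so a Lebesgue-type differentiation on $\Wu{w}$ can fail and one must pass to the martingale/Shannon--McMillan--Breiman argument along the filtration $\{f^{-n}\xi\}_n$, exactly as in \cite{LedYoungI}. One small caution in your write-up: the constant $C=\sup_{y}\nux[y](\Wuloc{y})$ is finite because the family $\{\mux\}$ varies continuously on compact plaques and $\Delta^u_y$ is uniformly bounded on local unstable leaves; you should say this explicitly, since the paper only states that each $\mux$ is Radon.
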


\section{Bernoulli property of the equilibrium state} % (fold)
\label{sec:bernoulli_property_of_the_equilibrium_state}

We can say more about the statistical properties of the system $(f,\muf)$ besides its ergodicity. A much stronger property is the so called Kolmogorov property, that can be characterized as follows.
\begin{definition}
For $\mm\in \PTM{f}{M}$ we say that $(f,\mm)$ is K-system if its Pinsker $\sigma$-algebra $\mathrm{Pin}(f,\mm)$  is trivial, where $\mathrm{Pin}(f,\mm)$ is generated by 
\[\{\xi \text{ countable measurable partition }: H_{\mm}(\xi)<\oo, h_{\mm}(f;\xi)=0 \}.
\] 
\end{definition}
Above $h_{\mm}(f;\xi)$ corresponds to the entropy of $f$ in the partition $\xi$; see \cite{ErgWal}.

There is a useful characterization of $\mathrm{Pin}(f,\mm)$. Given $X \subset M$ we say that it is 
\begin{itemize}
	\item $s$-saturated ($u$-saturated) if $x\in X\Rightarrow \Ws{x}\subset X$ (resp. $\Ws{u}\subset X$);
	\item bi-saturated if it is both $s$ and $u$ saturated;
	\item $\mm$-essentially $s$-saturated ($u$-saturated) if there exists a $s$-saturated ($u$-saturated) Borel set $X_0$ so that $\mm(X\Delta X_0)=0$.
\end{itemize}

\begin{theorem}[Ledrappier-Young, \cite{LedYoungI}]
	If $X\in \mathrm{Pin}(f,\mm)$ then $X$ is both $\mm$-essentially $s$-saturated and $\mm$-essentially $u$-saturated.
\end{theorem}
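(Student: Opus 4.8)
The assertion is symmetric under replacing $f$ by $f^{-1}$ (which exchanges $\Fs$ and $\Fu$ and does not change the Pinsker $\sigma$-algebra, since $\mathrm{Pin}(f,\mm)=\mathrm{Pin}(f^{-1},\mm)$), so it suffices to prove that every $X\in\mathrm{Pin}(f,\mm)$ is $\mm$-essentially $u$-saturated. Let $\mathcal{A}^u$ be the family of Borel $u$-saturated sets; this is a $\sigma$-algebra (being $u$-saturated is preserved by complements and countable unions), and $X$ is $\mm$-essentially $u$-saturated exactly when $X$ belongs to $\mathcal{A}^u$ modulo $\mm$-null sets. The plan is to realize $\mathcal{A}^u$, modulo $\mm$, as the remote $\sigma$-algebra of a measurable partition that is exhaustive in the sense of Rokhlin--Sinai.

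Let $\xi$ be the measurable partition subordinate to $\Fu$ furnished by \cite{LedYoungI}: its atoms are, for $\mm$-a.e.\ $x$, open plaques of $\Wu{x}$; one has $f\xi\preceq\xi$; the join $\bigvee_{n\ge0}f^{-n}\xi$ separates points modulo $\mm$; $H_{\mm}(\xi\mid f\xi)<\oo$ (the latter because $\log\norm{Df|E^u}\in\mathcal{L}^1(\mm)$, $M$ being compact); and $h_{\mm}(f,\xi)=h_{\mm}(f)$ (the Ledrappier--Young theorem that the unstable foliation carries all the entropy). These properties make $\xi$ an increasing partition for $g:=f^{-1}$, and it is moreover exhaustive for $g$, since $h_{\mm}(g,\xi)=h_{\mm}(f,\xi)=h_{\mm}(f)=h_{\mm}(g)$ (the outer equalities being formal). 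By the classical Rokhlin--Sinai description of the Pinsker $\sigma$-algebra via an exhaustive partition,
\[
\mathrm{Pin}(f,\mm)=\mathrm{Pin}(g,\mm)=\bigcap_{n\ge0}g^{-n}\widehat{\xi}=\bigcap_{n\ge0}\widehat{f^{n}\xi},
\]
where $\widehat{\zeta}$ is the $\sigma$-algebra generated by a partition $\zeta$. It remains to identify the last $\sigma$-algebra with $\mathcal{A}^u$ modulo $\mm$. Since $f\xi\preceq\xi$, the partitions $f^n\xi$ grow coarser with $n$, so the atom of $\bigcap_{n}\widehat{f^n\xi}$ through $x$ is the increasing union $\bigcup_{n\ge0}f^n\bigl(\xi(f^{-n}x)\bigr)$ of plaques of $\Wu{x}$; conversely, every Borel $u$-saturated set is a union of atoms of $f^n\xi$ for each $n$, hence lies in $\bigcap_n\widehat{f^n\xi}$. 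Thus the identification comes down to
\[
\bigcup_{n\ge0}f^n\bigl(\xi(f^{-n}x)\bigr)=\Wu{x}\qquad\text{for }\mm\text{-a.e.\ }x ,
\]
and, granting it, $\mathrm{Pin}(f,\mm)=\mathcal{A}^u$ modulo $\mm$, so $X$ is $\mm$-essentially $u$-saturated. Running the same argument for $g=f^{-1}$, whose unstable foliation is $\Fs$, yields likewise that $X$ is $\mm$-essentially $s$-saturated.

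The main obstacle is the exhaustion displayed above, and the delicate point is to establish it without any absolute continuity hypothesis on $\mm$. One uses that $\xi$ has uniformly positive size on a large set: by Lusin--Egorov, for each $\delta>0$ there are a Borel set $A_\delta$ with $\mm(A_\delta)>1-\delta$ and a radius $\rho_\delta>0$ such that $\xi(y)$ contains the $\rho_\delta$-ball of $\Wu{y}$ about $y$ for every $y\in A_\delta$. By Poincaré recurrence for $f^{-1}$, for $\mm$-a.e.\ $x$ one has $f^{-n}x\in A_\delta$ for infinitely many $n$; for each such $n$, the plaque $f^n\bigl(\xi(f^{-n}x)\bigr)$ contains $f^n$ of the $\rho_\delta$-ball of $\Wu{f^{-n}x}$ about $f^{-n}x$, which by the uniform exponential expansion of $\Fu$ along $f$ (property $(\ast)$) is a plaque of $\Wu{x}$ about $x$ of radius at least $c\lambda^{n}\rho_\delta$ with $\lambda>1$; letting $n\to\oo$ along these returns fills $\Wu{x}$. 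The remaining ingredients --- the construction of $\xi$ with the listed properties, and in particular that backward $f$-iteration separates points of distinct leaves so that $\bigvee_{n\ge0}f^{-n}\xi$ is genuinely the point partition modulo $\mm$ --- are routine and carried out in \cite{LedYoungI}.
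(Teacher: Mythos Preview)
The paper does not prove this theorem; it is quoted as a result of Ledrappier--Young \cite{LedYoungI} and used as a black box in the argument for the $K$-property. So there is no ``paper's own proof'' to compare with.

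Your sketch is a faithful outline of the original Ledrappier--Young argument: one constructs an increasing measurable partition $\xi$ subordinate to $\Fu$, invokes the Rokhlin--Sinai description of the Pinsker $\sigma$-algebra as the tail $\bigcap_n\widehat{f^n\xi}$, and then identifies this tail with the $\sigma$-algebra of $u$-saturated sets via the exhaustion $\bigcup_n f^n\xi(f^{-n}x)=\Wu{x}$. The recurrence/Lusin argument you give for the exhaustion is the standard one. Two small remarks: first, the identity $h_{\mm}(f,\xi)=h_{\mm}(f)$ that you label ``the unstable foliation carries all the entropy'' is itself the deep part of \cite{LedYoungI} and is what makes the Rokhlin--Sinai identification go through, so you are effectively citing the very paper whose theorem you are sketching; second, strictly speaking the theorem as stated only requires the inclusion $\mathrm{Pin}(f,\mm)\subseteq\mathcal{A}^u$ mod $\mm$, whereas you establish equality---this is harmless but worth noting. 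As a sketch the argument is sound.
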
 

Our argument to show the $K$-property is based on the program to establish stable ergodicity for conservative systems, developed originally by Grayson, Pugh and Shub \cite{StErgGeo} and extended by Pugh, Shub \cite{StableJulienne}, and several other authors, in particular Burns, Wilkinson \cite{ErgPH} and Hertz, Hertz and Ures \cite{AccStaErg}. These methods are very geometrical, but rely in the properties of the Lebesgue measure (in particular, its conditionals on leaves of the associated invariant foliations); in our case the corresponding measures $\mu^u,\mu^{cs}$ are more difficult to deal with, so a non-trivial adaptation is necessary. 

Fix then $X$ that is $\mm$-essentially $s$-saturated: we will show that it coincides with a $s$-saturated set $D(X)$ that corresponds to the density points of some dynamically defined differentiation basis. 

Choose small numbers $0<\varepsilon, \sigma<1$ and define the $\ast$-juliennes as  
\begin{align*}
J^u_n(x)&:=f^{-n}(\Wu{f^nx;\varepsilon})\\
J^s_n(x)&:=f^n(\Ws{f^{-n}x;\varepsilon})\\
B^c_n(x)&:=\Wc{x;\sigma^n}\\
J^{cu}_n(x)&:=\bigcup_{\mathclap{y\in B^c_n(x)}}\ J^u_n(y)\\
J^{scu}_n(x)&:=\bigcup_{\mathclap{y\in J_{n}^{cu}(x)}}\ J^s_n(y).
\end{align*}

By a careful control of the sizes of the juliennes we are able to deduce that $J^{scu}=\{J^{scu}_n(x):x\in M, n\in\Nat\}$ is a Vitali differentiation basis (cf. \cite{Guzman1975}) and the following.

\begin{theorem}
For every Borel set $X \subset M$, the set $D(X)$ of its density points with respect to the basis $J^{scu}$ is coincides $\muf\aep$ with $X$, and is $s$-saturated. Therefore, any $\muf$-essentially $s$-saturated set coincides $\muf\aep$ with a $s$-saturated set. 
\end{theorem}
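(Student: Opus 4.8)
The plan is to adapt the julienne density-point method of Grayson--Pugh--Shub, in the form used for stable ergodicity by Burns--Wilkinson and Hertz--Hertz--Ures (\cite{ErgPH,AccStaErg}), replacing Lebesgue measure by $\muf$, whose conditionals are the H\"older families of Theorem~\ref{thm:familiasmedidas}. The first assertion is precisely the Lebesgue density theorem for the Vitali basis $J^{scu}$: for every Borel $X$, $\muf$-almost every point of $M$ is a $J^{scu}$-density point of $X$ or of $M\setminus X$, so $D(X)=X$ $\muf\aep$. Since the $J^{scu}$-density of $X$ at a point depends on $X$ only through the numbers $\muf(X\cap J^{scu}_n(x))$, one has $D(X)=D(X')$ whenever $X=X'$ $\muf\aep$; hence the remaining content reduces to: \emph{if $X_0\subset M$ is Borel and $s$-saturated, then $D(X_0)$ is $s$-saturated}. (Granting this, an $\muf$-essentially $s$-saturated $X$ can be written $X=X_0$ $\muf\aep$ with $X_0$ $s$-saturated Borel, and then $D(X)=D(X_0)$ is $s$-saturated and $=X$ $\muf\aep$, which is the ``Therefore''.)

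First I would pass from $J^{scu}$-juliennes to $\Fcu$-leaves. By the $(\Fcu,\Fs)$-analogue of Theorem~\ref{thm:estadodequilibrio}(2) --- obtained by running the construction of Section~\ref{sec:construction_of_the_equilibrium_state} along $\Fcu$ and using uniqueness --- on a small box $U$ around $x$ the measure $\muf|_U$ is equivalent, with continuous positive density, to $\mcux\times\msx$ in the local product chart $\Wculoc{x}\times\Wsloc{x}\to U$. Let $\pi\colon J^{scu}_n(x)\to J^{cu}_n(x)$, $\pi(z)=\Ws{z}\cap\Wculoc{x}$; this is the $\Fcu$-coordinate in that chart, its fibre over $y$ is exactly $J^s_n(y)$, and $B_n:=\pi_{\ast}\big(\muf|_{J^{scu}_n(x)}\big)$ satisfies $\der B_n=g_n\,\der\mcux$ on $J^{cu}_n(x)$. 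Because $X_0$ is $s$-saturated and $\pi(z)\in\Ws{z}$, one has $z\in X_0\iff\pi(z)\in X_0$, so $X_0\cap J^{scu}_n(x)=\pi^{-1}(X_0\cap J^{cu}_n(x))$, whence
\[
\frac{\muf\big(X_0\cap J^{scu}_n(x)\big)}{\muf\big(J^{scu}_n(x)\big)}=\frac{B_n\big(X_0\cap J^{cu}_n(x)\big)}{B_n\big(J^{cu}_n(x)\big)}.
\]
A Gibbs-type estimate --- using that $\varphi$ is H\"older together with the fact that, for the standard relation between $\sigma$ and the hyperbolicity rate, the backward iterates $f^{-j}J^{cu}_n(x)$, $1\le j\le n$, stay exponentially small --- shows $\sup g_n/\inf g_n=1+o(1)$ over $J^{cu}_n(x)$, so the right-hand side equals $(1+o(1))\,\mcux(X_0\cap J^{cu}_n(x))/\mcux(J^{cu}_n(x))$. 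Thus $x\in D(X_0)$ iff $\mcux(X_0\cap J^{cu}_n(x))/\mcux(J^{cu}_n(x))\to 1$ --- a condition living entirely on the leaf $\Wcu{x}$ with the measure $\mcux$.

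Next I would compare neighbouring $\Fcu$-leaves. Fix $x'\in\Wsloc{x}$ and let $h=\hs_{x,x'}\colon\Wculoc{x}\to\Wculoc{x'}$ be the stable holonomy; $s$-saturation of $X_0$ gives $h(X_0\cap\Wculoc{x})=X_0\cap\Wculoc{x'}$. Two inputs are needed. (a) \emph{Julienne quasi-conformality of $h$}: there is $k=k(x,x')$ with $J^{cu}_{n+k}(x')\subset h\big(J^{cu}_n(x)\big)\subset J^{cu}_{n-k}(x')$ for all large $n$; this is proved as in the sketch after Theorem~\ref{thm:familiasmedidas}, writing $h=f^{n}\circ\hs_{f^{-n}x,f^{-n}x'}\circ f^{-n}$ and using exponential contraction of $\Fs$ so that $\hs_{f^{-n}x,f^{-n}x'}$ is uniformly $C^1$-close to the identity in exponential charts, hence nearly preserves the unstable-disc/center-ball description of $J^{cu}_n$ --- the delicate point being the center factor $\Wc{\cdot;\sigma^n}$, since $f|\Fc$ is isometric, where the size estimates and the choice of $\sigma$ enter. (b) \emph{Bounded distortion of $h$}: $(h^{-1})_{\ast}\mcux[x']=\Jacs[x,x']\cdot\mcux$ with $\Jacs[x,x']$ continuous and pinched between two positive constants on $J^{cu}_n(x)$ --- the $s$-analogue of \eqref{eq:jacu} and of the absolute continuity statement following Theorem~\ref{thm:familiasmedidas}, uniform because $\varphi$ is H\"older and $\Fs$ contracts exponentially. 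Combining (a), (b) and a squeeze in $n$, the leaf-densities of $X_0$ along $\Wcu{x}$ at $x$ and along $\Wcu{x'}$ at $x'$ tend to the same limit; with the previous paragraph, $x\in D(X_0)\iff x'\in D(X_0)$ for $x'\in\Wsloc{x}$. Since any $x'\in\Ws{x}$ is joined to $x$ by a finite chain $x=x_0,\dots,x_m=x'$ with $x_{i+1}\in\Wsloc{x_i}$ (cover a leaf-path by local stable plaques), the equivalence propagates along the whole leaf, so $D(X_0)$ is $s$-saturated.

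The main obstacle is making (a) and (b) uniform \emph{for $\muf$} rather than for Lebesgue. Two features distinguish this from the classical stable-ergodicity setting: the conditionals $\mux,\msx,\mcux$ are only H\"older, so all distortion and oscillation estimates must be pushed through the H\"older/Gibbs bounds of Theorems~\ref{thm:familiasmedidas}--\ref{thm:estadodequilibrio} rather than through $C^1$ smoothness of holonomies; and the center direction is isometric, so the center-ball factor $\Wc{\cdot;\sigma^n}$ of the juliennes is not dynamically self-similar, and the comparison constants must be controlled by an explicit accounting of how $f$ and the stable holonomies move these center balls --- exactly the careful size control referred to just before the statement, and the place where the auxiliary parameter $\sigma$ is used.
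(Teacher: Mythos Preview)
The paper is an announcement and contains no proof of this theorem beyond the sentence preceding it: that $J^{scu}$ is a Vitali differentiation basis, obtained ``by a careful control of the sizes of the juliennes'', and that the argument adapts the Grayson--Pugh--Shub/Burns--Wilkinson/Hertz--Hertz--Ures julienne density-point machinery from Lebesgue measure to $\muf$. Your proposal follows exactly this indicated strategy, and you correctly isolate the two genuine technical difficulties singled out in the paper (H\"older rather than smooth conditionals; the isometric center and the role of $\sigma$). In that sense your outline matches the paper's.

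There is, however, a logical gap in your reduction. The theorem asserts that $D(X)$ is $s$-saturated for \emph{every} Borel set $X$, whereas your argument establishes this only when $X$ is $\muf$-essentially $s$-saturated: you pass to a genuinely $s$-saturated $X_0$ with $X=X_0$ $\muf\aep$ and then prove $D(X_0)$ is $s$-saturated. Your Fubini step uses $s$-saturation of $X_0$ essentially, via the identity $X_0\cap J^{scu}_n(x)=\pi^{-1}(X_0\cap J^{cu}_n(x))$; without it you cannot reduce the $scu$-density of $X$ to a $cu$-leaf density. So what you have actually proved is the ``Therefore'' clause, not the first sentence in full generality.

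The way the julienne method yields the full statement---and presumably what the paper does in the preprint it cites---bypasses the $cu$-leaf reduction entirely. One shows directly that for $x'\in\Wsloc{x}$ the $scu$-juliennes themselves are internested, say $J^{scu}_{n+k}(x')\subset J^{scu}_n(x)\subset J^{scu}_{n-k}(x')$, using that (i) the $s$-components $J^s_n$ at $x$ and $x'$ are comparable by a finite index shift, and (ii) the $cu$-components satisfy the quasi-conformality you state in (a). Combined with the engulfing property $\muf(J^{scu}_{n-k}(x))\le C\,\muf(J^{scu}_{n}(x))$ (which comes from the same Gibbs-type estimates you invoke), this gives $x\in D(X)\iff x'\in D(X)$ for \emph{arbitrary} Borel $X$, with no hypothesis on $X$. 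Your ingredients (a) and (b) are precisely what is needed; they should simply be deployed on the $scu$-juliennes directly rather than after the Fubini step.
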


The above Theorem implies that $(f,\muf)$ is a Kolmogorov system, due to the fact that every leaf of $\Ws$ is dense (minimality of $\Fs$): given $X\in \mathrm{Pin}(f,\muf)$ the sets $D(X), D(X^c)$ are $s$-saturated, therefore if $0<\muf(X)<1$ we will have density points of $X$ and $X^c$ arbitrarily close, contradicting their definition.

After establishing the $K$-property, we improve the result and show that in fact the system $(f,\muf)$ is isomorphic to a Bernoulli scheme.

\begin{definition}
Let $\mm\in \PTM{f}{M}$. The system $(f,\muf)$ is Bernoulli if its induced stochastic process is isomorphic to the Bernoulli process with finite marginal distribution. 
\end{definition}

Equivalently, it is measure theoretically isomorphic to a map $\sigma: \{1,\cdots, N\}^{\Z}\to \{1,\cdots, N\}^{\Z}$,
$\sigma(\{x_n\}_n)=\{x_{n+1}\}_n$, where the $\sigma$-invariant measure is the product measure induced by some finite distribution on $\{1,\cdots, N\}$.

Using the Kolmogorov property we are able to adapt the arguments of Ornstein and Weiss \cite{GeoBernoulli} for the time-one of the geodesic flow to our setting and prove:

\begin{theorem}
	$(f,\muf)$ is Bernoulli.
\end{theorem}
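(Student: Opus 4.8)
Since $(f,\muf)$ is already known to be a Kolmogorov system, the plan is to invoke Ornstein's isomorphism theory: it suffices to produce a refining sequence of finite generating partitions, each of which is \emph{very weak Bernoulli} (VWB), and for a $K$-automorphism the triviality of the tail reduces the verification of VWB for a partition $\mathcal P$ to controlling only a finite window of iterates. One first fixes, for each $\delta>0$, a finite partition $\mathcal P_\delta$ of diameter $<\delta$ with $\muf$-null boundary that is ``rectangular'' for the local product structure $\muf|U\sim\mux\times\mcsx$ of Theorem~\ref{thm:estadodequilibrio}, i.e.\ whose boundary is a finite union of plaques of $\Fu$ and of $\Fcs$; continuity of the invariant foliations and full support of $\muf$ make such partitions available, and they generate as $\delta\to0$.

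The geometric heart is the VWB estimate for $\mathcal P=\mathcal P_\delta$. Because $f$ uniformly expands $\Fu$, is isometric on $\Fc$, and uniformly contracts $\Fs$, the atoms of a long future block $\bigvee_{j=0}^{N}f^{-j}\mathcal P$ are, up to an error exponentially small in $N$, contained in single local $\Fcs$-plaques, while the atoms of a long past block $\bigvee_{j=1}^{N}f^{j}\mathcal P$ are correspondingly thin transverse to $\Fcu$, hence approximate $\Fcu$-plaques. Conditioning on such a past atom thus essentially prescribes the $\Fcu$-coordinate of a point and leaves its $\Fu$-holonomy free, whereas a future block depends only on the $\Fcs$-coordinate. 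The absolute continuity of $\Fu$ with respect to the family $\mu^{cs}$ obtained after Theorem~\ref{thm:familiasmedidas} --- with holonomy Jacobian $\Jac^{u}$ uniformly close to $1$ on small plaques --- and, symmetrically, the absolute continuity of $\Fs$ with respect to $\mu^{cu}$, then show that sliding a fixed future block along $\Fu$ from one $\Fcs$-plaque to a nearby one perturbs its distribution by a Radon--Nikodym factor tending uniformly to $1$. Together with the $K$-property, which allows one to ignore all but finitely many coordinates of the past, this gives that the conditional distribution of the future $k$-block given a typical past $N$-block is $\bar d$-close to the unconditional distribution, uniformly in $k$ once $N$ is large; averaging over past atoms yields that $\mathcal P_\delta$ is VWB, and Ornstein's theorem then gives that $(f,\muf)$ is isomorphic to a Bernoulli shift.

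The main obstacle is to make this estimate \emph{uniform over a set of past atoms of measure $>1-\epsilon$} rather than merely pointwise: this requires quantitative control of the $\Fu$- and $\Fs$-holonomy Jacobians between nearby plaques --- available from Theorem~\ref{thm:familiasmedidas} and its corollaries --- and, more delicately, control of how the isometric center direction, shared by $\Fcs$ and $\Fcu$ and neither contracted nor expanded, interacts with the partition boundaries and with the exponential estimates. This is precisely where the Ornstein--Weiss argument, written originally for the time-one map of the geodesic flow (the case $\dim E^u=\dim E^s=\dim E^c=1$, where $\muf$ is the Liouville measure and the holonomies are the classical horocyclic ones), needs genuine adaptation; once it is in place the conclusion follows.
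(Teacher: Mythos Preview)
Your proposal is correct and follows essentially the same approach the paper indicates: the paper itself gives no detailed proof here, only announcing that the Bernoulli property is obtained by adapting the Ornstein--Weiss argument for the time-one map of the geodesic flow, using the already-established Kolmogorov property, with the main difficulty being that the relevant conditionals are the families $\mu^u,\mu^{cs}$ rather than Lebesgue. Your sketch via VWB partitions, rectangular atoms adapted to the local product structure of Theorem~\ref{thm:estadodequilibrio}, and holonomy control from Theorem~\ref{thm:familiasmedidas} is exactly this program, and your identification of the uniform-control issue (particularly the neutral center direction) matches the paper's stated obstacle.
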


Again, a non-trivial amount of work is required since Ornstein and Weiss method is adapted for the Lebesgue measure, whereas in our case the control in the conditionals is necessarily more delicate.

% section bernoulli_property_of_the_equilibrium_state (end)

\section{Applications to the rank-one case: unique ergodicity of quasi-invariant measures} % (fold)
\label{sec:applications_to_the_rank_one_case_unique_ergodicity_of_quasi_invariant_measures}

Our results are a generalization of the classical theorems for hyperbolic diffeomorphisms and flows, \cite{SRBattractor}, that is, rank-one Anosov actions (of $\Z$ or $\Real$). In these cases the results can be obtained using the powerful tool of symbolic dynamics, which permits to reduce the study of the smooth map to a much more manageable symbolic model. Regrettably, this technology becomes much more intricate outside hyperbolic systems, and although Sarig made some recent breakthrough in establishing symbolic models for non completely hyperbolic systems \cite{Sarig2012}, the tools become more difficult to work and seem to require non-uniform hyperbolicity (which is never satisfied for center isometries). 

In spite of the applicability of symbolic dynamics to the study of hyperbolic systems, our geometrical method gives some new information even in this classical setting. We will enunciate one illustrative result.

Suppose that $f:M\to M$ is an hyperbolic diffeomorphism such that $\dim E^u=1$, which with no loss of generality can be assumed to be oriented. Then $E^u$ is tangent to the orbits of a flow $\Phi^u=\{\Phi^u_t:M\to M\}_{t\in \Real}$, called the horocyclic flow. This is a prototype of parabolic flow; the dynamics of such systems have several consequences in geometry and number theory; see \cite{UnipotentFlows} for a discussion.

The following celebrated theorem is originally due to Furstenberg \cite{Furstenberg1973}, while the version below is due to Marcus \cite{Marcus1975}.  

\begin{theorem}[Furstenberg]
If $f$ is a hyperbolic map of class $\mathcal{C}^2$ such that every orbit of $\Phi^u$ is dense, then $\Phi^u$ is uniquely ergodic. That is, there exists only one (probability) measure invariant under $\Phi^u$. 

It follows that every orbit of $\Phi^u$ is equidistributed in $M$.
\end{theorem}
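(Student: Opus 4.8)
The plan is to deduce unique ergodicity of the horocyclic flow $\Phi^u$ from the uniqueness statement in Theorem \ref{thm:estadodequilibrio} applied to the SRB potential $\varphi = -\log\det Df|E^u$. Since $\dim E^u = 1$, the unstable foliation $\Fu$ coincides with the orbit foliation of $\Phi^u$, and the induced Riemannian arc-length on each leaf is exactly the induced Lebesgue measure appearing in the SRB case. First I would observe that the density hypothesis (``every orbit of $\Phi^u$ is dense'') is precisely the minimality of $\Fu$ required in Theorem \ref{thm:familiasmedidas}; density of $\Fs$-leaves for a $\mathcal{C}^2$ hyperbolic map with $\dim E^u=1$ follows from topological transitivity together with the structure of Anosov diffeomorphisms (or can be assumed, as one works on the non-wandering set). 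Thus the conclusions of Theorems \ref{thm:familiasmedidas} and \ref{thm:estadodequilibrio} hold: there is a unique equilibrium state $\muf$ for the SRB potential, and on small foliation boxes $U$ centered at $x$ one has $\muf|U$ equivalent to $\mux \times \mcsx$.

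Next I would identify the leafwise measures $\mux$ with the arc-length measure on $\Phi^u$-orbits, up to the conformal density $\Delta^u_x$ of \eqref{eq.nux}. For the SRB potential the quasi-invariance relation $\mu^u_{fx} = e^{P-\varphi}f_*\mu^u_x$ in Theorem \ref{thm:familiasmedidas}(2a) together with $e^{-\varphi} = \det Df|E^u = \|Df|E^u\|$ (a $1$-dimensional Jacobian) says exactly that $f$ expands $\mux$ by the linear expansion rate, i.e. $\mux$ transforms like arc-length; hence $\mux$ is a constant multiple of Lebesgue arc-length on $\Wu{x}$, and the density $\Delta^u_x$ is identically $1$ (since $\varphi\circ f^{-k}$ cancels in the telescoping product once we use that $\mux$ is already the Jacobian-adapted measure — more precisely $\nu^u_x = \mux$ in this case). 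Now let $\lambda$ be any $\Phi^u$-invariant Borel probability measure on $M$. Invariance under the flow means that on each foliation box, the conditional measures of $\lambda$ along $\Fu$-leaves are, up to normalization, translation-invariant for the flow, hence proportional to arc-length, hence proportional to $\mux$. Therefore $\lambda$ has conditionals along $\Fu$ absolutely continuous (indeed equivalent) to the family $\mu^u$. By Theorem \ref{thm:conversemarginal}, this forces $\lambda$ to be an equilibrium state, and by the uniqueness in Theorem \ref{thm:estadodequilibrio}(1), $\lambda = \muf$. Since $\lambda$ was arbitrary, $\Phi^u$ is uniquely ergodic.

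Finally, unique ergodicity of a continuous flow on a compact metric space immediately gives that Birkhoff averages $\frac1T\int_0^T g(\Phi^u_t y)\,dt$ converge uniformly in $y$ to $\int g\,d\muf$ for every $g\in C(M)$; this is the equidistribution statement, and it is a standard consequence requiring no further input. The main obstacle I anticipate is the middle step: rigorously passing from ``$\lambda$ is $\Phi^u$-invariant'' to ``the $\Fu$-conditionals of $\lambda$ with respect to a Ledrappier–Young partition $\xi$ are absolutely continuous with respect to $\mu^u$''. One must choose the partition $\xi$ so that its atoms are (relatively open, precompact) pieces of $\Fu$-leaves, disintegrate $\lambda$ along $\xi$, and show the conditionals are non-atomic and flow-quasi-invariant on their atoms — which, since the flow acts by translation along the $1$-dimensional leaves and $\lambda$ is a \emph{probability} measure (so the conditionals are finite), pins them down as normalized arc-length. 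Care is needed because a priori $\lambda$ need not give positive mass to every open set, so one argues locally on a box where $\lambda$ has positive mass and uses that such boxes cover a $\lambda$-full set, together with the density of leaves to propagate. Once this local absolute-continuity statement is in hand, Theorem \ref{thm:conversemarginal} does the rest.
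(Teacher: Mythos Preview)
The paper does not prove this statement at all: Furstenberg's theorem is quoted as a classical result (with attribution to Furstenberg and Marcus) serving as motivation for the paper's own Theorem~\ref{thm:uniqueergoq}. So there is no ``paper's own proof'' to compare with; I can only assess the internal soundness of your derivation from the paper's machinery.

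There is a genuine gap, and it is fatal to the argument as written. Theorem~\ref{thm:conversemarginal} has the standing hypothesis $\mm\in\PTM{f}{M}$, i.e.\ the measure must be $f$-invariant. You apply it to an arbitrary $\Phi^u$-invariant probability $\lambda$, but such a $\lambda$ is typically \emph{not} $f$-invariant. Indeed, $f$ maps $\Fu$-leaves to $\Fu$-leaves while stretching arc-length by the non-constant factor $\lvert Df|E^u\rvert$; hence if $\lambda$ has arc-length conditionals along $\Fu$ (which is exactly what $\Phi^u$-invariance gives), then $f_{\ast}\lambda$ has conditionals with density $\lvert Df^{-1}|E^u\rvert\circ$ (leaf coordinate) relative to arc-length. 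Unless $\lvert Df|E^u\rvert$ is constant along every unstable leaf (which fails generically and is not assumed), $f_{\ast}\lambda\neq\lambda$. So the hypothesis of Theorem~\ref{thm:conversemarginal} is not met, and the step ``by Theorem~\ref{thm:conversemarginal}, $\lambda$ is an equilibrium state'' does not go through. The same computation shows, incidentally, that the SRB measure $\muf$ itself is in general \emph{not} $\Phi^u$-invariant: its $\Fu$-conditionals are $\nu^u_x=\Delta^u_x\,\mux$ with $\Delta^u_x$ non-constant, contrary to your claim that $\Delta^u_x\equiv 1$ for the SRB potential (the product $\prod_k e^{\varphi(f^{-k}y)-\varphi(f^{-k}x)}$ does not telescope).

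Two further remarks. First, the instance of the paper's framework that actually specializes to Furstenberg's theorem is $\varphi\equiv 0$ in Theorem~\ref{thm:uniqueergoq} (so that $\Jacu\equiv 1$ and ``quasi-invariant with Jacobian $\Jacu$'' becomes ``invariant''), not the SRB potential. Second, even with the correct potential the route through Theorem~\ref{thm:conversemarginal} is blocked for the same $f$-invariance reason; the paper's Theorem~\ref{thm:uniqueergoq} is proved by different means (not detailed in this announcement), using directly the leafwise families $\mu^{cs}$ and the holonomy Jacobian rather than the characterization of $f$-equilibrium states.
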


We remark that no example of hyperbolic map not satisfying the minimality condition is currently known. Above, a Borel measure $\mu$ is said to be invariant under flow if its invariant under every $\Phi^u_t$. A more general notion is the following.

\begin{definition}
Given a flow $\Psi=\{\psi_t\}_t$ on a $M$, a measure $\mu$ is conformal for $\Psi$ if there exists a family of positive functions $J=\{J_t:M\to\Real_{>0}\}$, called the Jacobian of $\Psi$ with respect to $\mu$ such that for every $t\in\Real$,
\[
	(\Phi_{-t}^u)_{\ast}\mu=J_t\mu.
\] 	
 \end{definition}
This definition was given by Patterson in \cite{Patterson1976}, in a different context (limit sets for Fuchsian groups), and has an important role in geometry and ergodic theory. See for example \cite{FeliksPrzytycki2011}, \cite{overviewpattersonsullivan}. The definition for diffeomorphisms is analogous.

The following is a remarkable existence and uniqueness result for conformal measures.
 
\begin{theorem}[Douady and Yoccoz \cite{Douady1999}]
Let $f:\Tor\to\Tor$ be a $\mathcal{C}^2$ diffeomorphism of the circle with irrational rotation number. Then for every $s\in \Real$ there exists a unique conformal measure with Jacobian $s\cdot Df$. 
\end{theorem}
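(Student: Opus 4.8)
The plan is to deduce the statement from Theorems \ref{thm:familiasmedidas}, \ref{thm:estadodequilibrio} and \ref{thm:conversemarginal}, by realizing $f$ as the center factor of a center isometry and interpreting a conformal measure for $f$ as the conditional, along center leaves, of the associated equilibrium state. To build the ambient system, fix a linear Anosov diffeomorphism $g$ of $\Tor^2$ with $\Fu_g$ and $\Fs_g$ minimal, set $M=\Tor^2\times\Tor$ and $F=g\times f$, so that $E^u=E^u_g$, $E^s=E^s_g$, and $E^c$ is the circle direction, on which $F$ acts by $f$. Because $f$ is $\mathcal{C}^2$ with irrational rotation number, Denjoy's theorem provides a homeomorphism $h$ of $\Tor$ conjugating $f$ to an irrational rotation; pulling back the flat metric by $h$ yields a continuous metric on $E^c$ making $F$ a center isometry of class $\mathcal{C}^2$ in the sense of Definition \ref{def:centerisometry}, and the leaves $\Wu{p}\times\Tor$ and $\Ws{p}\times\Tor$ of $\Fu$, $\Fs$ are dense since the corresponding $g$-leaves are dense in $\Tor^2$.

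Next, for $s\in\Real$ one runs Theorem \ref{thm:familiasmedidas} for the H\"older potential $\varphi_s$ on $M$ that along center leaves is $s\log\abs{Df}$ (so that on $\Tor$ the case $s=1$ recovers the Lebesgue-class conformal measure and $s=0$ the unique $f$-invariant measure). Granting this, one obtains the families $\mu^u,\mu^{cs},\dots$; disintegrating $\mcsx$ along the center sub-foliation of $\Wcs{x}$, whose leaves are copies of $\Tor$, and invoking part $(2)$ of Theorem \ref{thm:familiasmedidas} restricted to the center direction, one gets on each center leaf a probability which, read on $\Tor$, satisfies $(f^{-1})_\ast\nu=\abs{Df}^{s}\nu$, i.e. is a conformal measure with Jacobian $\abs{Df}^s$; the density of the leaves of $\Fu$, $\Fs$ together with the holonomy invariance of the families makes it independent of the chosen center leaf. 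This proves existence. For uniqueness, given any conformal probability $\nu$ for $\abs{Df}^s$ on $\Tor$, one assembles from $\mu^u$ (along $\Fu$) and $\nu$ (along $\Fc$), by the foliation-box gluing of Section \ref{sec:construction_of_the_equilibrium_state}, an $F$-invariant Borel probability $\mm_\nu$ on $M$ --- here the conformality of $\nu$ is exactly what makes the construction $F$-invariant --- whose conditionals along $\Fu$ are equivalent to $\mux$. By Theorem \ref{thm:conversemarginal} such a measure must be the equilibrium state of $(F,\varphi_s)$, which is unique by Theorem \ref{thm:estadodequilibrio}; hence $\mm_\nu$, and in particular its center conditional $\nu$, is uniquely determined.

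The main obstacle is the middle step: when the rotation number of $f$ is Liouville the function $s\log\abs{Df}$ pulled up to $M$ is not constant on center leaves and need not even be cohomologous over $f$ to a constant (that would force $f$ to preserve an absolutely continuous measure with continuous positive density), so Theorem \ref{thm:familiasmedidas} does not apply literally. One therefore needs either to replace $F$ by a suitably twisted extension in which $s\log\abs{Df}$ appears as the unstable Jacobian $-\log\det D(\cdot)|E^u$, or to extend the geometric construction of Section \ref{sec:construction_of_the_equilibrium_state} directly to potentials of this type; getting this to work is where the real difficulty lies, along with the delicate disintegration bookkeeping --- checking that the center conditional is a probability of full support on the circle and obeys the conformality identity exactly --- for which the invariance relation \eqref{eq:invariancianu} and the absolute continuity of $\Fu$ with respect to $\mu^{cs}$ are the key tools.
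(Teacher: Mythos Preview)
The paper does not prove this theorem. It is quoted, with attribution to Douady and Yoccoz \cite{Douady1999}, purely as context and motivation for the authors' own Theorem~\ref{thm:uniqueergoq}; no argument for it appears anywhere in the text. There is therefore nothing in the paper to compare your proposal against.

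As for the proposal itself, it does not go through, and in fact breaks earlier than you indicate. In your product $F=g\times f$ on $\Tor^2\times\Tor$, the \emph{strong} unstable bundle is $E^u_g\times\{0\}$, so the leaf of $\Fu$ through $(p,\theta)$ is $W^u_g(p)\times\{\theta\}$, not $W^u_g(p)\times\Tor$ as you wrote; the latter is a $\Fcu$-leaf. Each strong leaf is trapped in the slice $\Tor^2\times\{\theta\}$ and hence is never dense in $M$, so the standing hypothesis ``every leaf of $\Fs,\Fu$ is dense'' of Theorem~\ref{thm:familiasmedidas} already fails for this model, regardless of the potential. Beyond this, the obstacle you yourself flag is genuine and unresolved: $s\log|Df|$ lifted to $M$ is neither constant on center leaves nor equal to $-\log\det DF|E^u$, so Theorems~\ref{thm:familiasmedidas}--\ref{thm:conversemarginal} do not apply to it, and your suggested fixes (a twisted extension, or extending the construction) are not carried out. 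Finally, even if one had the families of measures, extracting from the quasi-invariance of $\mu^{cs}$ under $F$ the precise conformality law $(f^{-1})_\ast\nu=|Df|^s\nu$ for the center conditional under $f$ alone would require knowing that $\mu^{cs}$ factors as a product along $\Fs$ and $\Fc$, which the paper does not establish in this generality.
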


A particularly natural Jacobian for the flow $\Phi^u$ is obtained by taking $J=\mathrm{Jac}^{u}$ (cf. \eqref{eq:jacu}). Using our methods we are able do prove the next theorem.

 \begin{theorem}\label{thm:uniqueergoq}
In the cited hypotheses, let $\varphi:M\to \Real$ be a H\"older function and consider the multiplicative cocycle $\Jacu$ that it defines. Then there exists a unique quasi-invariant measure for $\Phi^u$ with Jacobian $\mathrm{Jac}^{u}$.
 \end{theorem}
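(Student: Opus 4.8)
The strategy is to reduce Theorem \ref{thm:uniqueergoq} to Theorems \ref{thm:familiasmedidas} and \ref{thm:estadodequilibrio} applied to the diffeomorphism $f$, viewed as a center isometry with trivial center ($E^c=0$), whose center foliation $\Fc$ has points as leaves and for which the $\Fu$-leaves are exactly the orbits of the horocyclic flow $\Phi^u$. Since $\dim E^u=1$, the potential $\varphi$ is trivially constant on center leaves, so case $(1)$ of Theorem \ref{thm:familiasmedidas} applies and we obtain the constant $P=\Ptop(f,\varphi)$ together with the families $\mu^u=\{\mux\}$, $\mu^{cs}=\{\mcsx\}$ (here $\Fcs=\Fs$) with the quasi-invariance $\mux[fx]=e^{P-\varphi}f_\ast\mux$ and $\mcsx[fx]=e^{\varphi-P}f_\ast\mcsx$. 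First I would record that the one-dimensionality of $E^u$ lets one parametrize each leaf $\Wu{x}$ by arclength and thereby view $\mux$ (or better, the renormalized $\nux$ from \eqref{eq.nux}) as a Radon measure on the orbit of $\Phi^u$ through $x$; the flow being minimal is exactly the density hypothesis on $\Fu$-leaves in Theorem \ref{thm:familiasmedidas}.

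The heart of the matter is to pass from the discrete-time quasi-invariance to flow-conformality. For this I would show that, up to normalization, the family $\{\nux\}$ defines a single locally finite $\Phi^u$-quasi-invariant measure whose Jacobian is precisely $\mathrm{Jac}^u$ from \eqref{eq:jacu}. The key computation: the $\Phi^u$-holonomy between two points $x_0,y_0$ on the same $\Fu$-leaf is what \eqref{eq:jacu} computes, and the displayed absolute-continuity statement for $\Fu$ with respect to $\mu^{cs}$ (the sketch after Theorem \ref{thm:familiasmedidas}) shows $(\hu)^{-1}_\ast\mu^{cs}_{y_0}=\Jacu\cdot\mu^{cs}_{x_0}$; combining this with the construction of $\muf$ in Section \ref{sec:construction_of_the_equilibrium_state} (where $\muf|U$ is built by integrating $\nux$ against $\mcsx[x_0]$ over a $\Fcs$-slice) shows that the conditionals of $\muf$ along $\Fu$ are, in the arclength parametrization, exactly the $\Phi^u$-conformal densities with Jacobian $\mathrm{Jac}^u$. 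Disintegrating $\muf$ along orbits of $\Phi^u$ then yields a conformal measure for $\Phi^u$ with the prescribed Jacobian — this gives \emph{existence}.

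For \emph{uniqueness}, suppose $\mu$ is any $\Phi^u$-quasi-invariant probability with Jacobian $\mathrm{Jac}^u$. The plan is to show that $\mu$, being conformal with this Jacobian, must have conditionals along $\Fu$ proportional to the $\nux$ (equivalently, absolutely continuous with respect to $\mux$) on a set of positive measure; once this is established, invoke Theorem \ref{thm:conversemarginal}: the hypothesis ``$\mm_x\ll\mu^u_x$ for $\mm$-a.e. $x$'' is met, forcing $\mu=\muf$. To get the conditional statement, I would argue that a $\Phi^u$-conformal density with a \emph{prescribed} continuous Jacobian is unique up to scalar along each orbit — this is a one-dimensional ODE-type rigidity: if $\rho_1,\rho_2$ are two such densities on an orbit, $\rho_1/\rho_2$ is $\Phi^u$-invariant along the orbit, hence constant. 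The subtlety is that this forces uniqueness only leaf-by-leaf, so one still needs that the transverse component is pinned down; that is exactly what Theorem \ref{thm:conversemarginal} supplies, since an equilibrium state is unique.

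The main obstacle I anticipate is the matching of normalizations and measurability across the disintegration: one must verify that the leafwise conformal densities of an abstract quasi-invariant $\mu$ assemble into a genuine measurable conditional system subordinated to a Ledrappier–Young partition $\xi$ of the type used in Theorem \ref{thm:conversemarginal}, and that the Jacobian identity holds pointwise rather than merely a.e.; the H\"older regularity of $\varphi$ (which makes the infinite products in \eqref{eq:jacu} and \eqref{eq.nux} converge uniformly) is what makes this feasible, but checking that a merely quasi-invariant $\mu$ cannot have conditionals singular to $\mux$ while still carrying the \emph{same} Jacobian — i.e. ruling out exotic conformal measures — is the delicate point, and is precisely where the converse characterization of equilibrium states does the decisive work.
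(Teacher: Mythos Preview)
The paper is an announcement and contains no proof of Theorem \ref{thm:uniqueergoq}; it only states the result and points to the companion preprints. So there is nothing in the text to compare against, but your proposal has a structural gap that would have to be repaired regardless of the eventual route.

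The decisive problem is the appeal to Theorem \ref{thm:conversemarginal} for uniqueness. That theorem has two hypotheses: $\mm\in\PTM{f}{M}$ (i.e.\ $f$-invariance) and $\mm_x\ll\mu^u_x$ for $\mm$-a.e.\ $x$. A $\Phi^u$-quasi-invariant probability with Jacobian $\Jacu$ satisfies neither a priori. It is not assumed $f$-invariant, and in general it is not: for a generic H\"older $\varphi$ the $\Phi^u$-conformal measure and the equilibrium state $\muf$ are different objects. Your own leafwise computation shows why the second hypothesis also fails: under the arclength parametrization, a density on an orbit that is conformal with Jacobian $t\mapsto\Delta^u_x(\Phi^u_t x)$ is $\Delta^u_{x_0}$ times \emph{Lebesgue}, whereas the $\Fu$-conditionals of $\muf$ are $\nu^u_x=\Delta^u_x\cdot\mu^u_x$. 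These agree only when $\mu^u$ is Lebesgue on leaves, i.e.\ in the SRB case. So for general $\varphi$ the conditionals of a $\Phi^u$-conformal measure are not $\ll\mu^u_x$, and Theorem \ref{thm:conversemarginal} simply does not speak to the situation. Your closing remark that ``the converse characterization of equilibrium states does the decisive work'' in ruling out singular conditionals is therefore backwards: Theorem \ref{thm:conversemarginal} \emph{assumes} $\mm_x\ll\mu^u_x$, it does not supply it.

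The existence paragraph has a related confusion. ``Disintegrating $\muf$ along orbits of $\Phi^u$'' yields conditional measures on leaves, not a probability on $M$; and by the computation above those conditionals are \emph{not} the conformal densities you want (outside the SRB case). The natural candidate for the conformal measure is instead built from arclength along $\Fu$-leaves and the family $\mu^{cs}=\mu^{s}$ on transversals: the absolute-continuity sketch after Theorem \ref{thm:familiasmedidas} says precisely that $\Fu$-holonomy transforms $\mu^{cs}$ with Jacobian $\Jacu$, which is what makes that measure $\Phi^u$-conformal. Uniqueness must then be argued at the level of this \emph{transverse} data --- showing that any system of measures on $cs$-transversals with holonomy Jacobian $\Jacu$ coincides with $\mu^{cs}$ up to a scalar --- and this is a separate rigidity statement, not a corollary of Theorem \ref{thm:conversemarginal}.
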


This generalizes Furstenberg's result, and shows some strong rigidity in the possible dynamically relevant measures for the horocyclic flow outside its invariant one. The above theorem has also a version for hyperbolic flows; for the particular case of the geodesic flow in an hyperbolic manifold, the previous result was first established by Babillot and Ledrappier \cite{geodesicbabillotledrappier}. See also Schapira's article \cite{SCHAPIRA2004}. 

Comparing with Douady and Yoccoz' result, one gets enough evidence for the existence of some rigidity phenomena in the set of invariant measures of parabolic systems. One can ask:

\begin{question}
Let $\Phi$ be a minimal parabolic flow and $J$ a non-negative multiplicative cocycle. Does there exist a unique conformal measure with Jacobian $J$?
\end{question}

A particular instance of parabolic flows are unipotent flows; due to the general results for such type of system obtained by Ratner (and others) \cite{Ratner1991}, a positive answer of the above setting would be very interesting.

\section{Concluding remarks} % (fold)
\label{sec:concluding_remarks}

In the present note we gave a resume of some new geometrical methods to study thermodynamic formalism for generalizations of hyperbolic systems. These method seem to be generalizable to other situations, and hopefully would shed some light outside the hyperbolic realm. We point out the existence of other geometrical approaches considering some particular cases, as \cite{Climenhaga2020} and \cite{Spatzier2016}. The synergy between these and our methods stands as an interesting problem to investigate.

All the reported results in this article are in preprint form in \cite{EqStatesCenter} and \cite{ContributionsErgodictheory}.

\smallskip

\paragraph{\textbf{Acknowledgments}} The authors thank Barbara Schapira for bringing to our attention the references regarding theorem \ref{thm:uniqueergoq} for the geodesic flow case.

% section concluding_remarks (end)

\bibliographystyle{alpha}
\bibliography{/home/pablo/Dropbox/Latex/bibliografia.bib}
\end{document}